\documentclass[11pt,oneside]{article}

\usepackage[utf8]{inputenc}
\usepackage[T1]{fontenc}
\usepackage[english]{babel}

\usepackage[tracking=true,expansion=true,protrusion=true]{microtype}
\usepackage{amsthm} 
\usepackage{newpxtext,newpxmath} 
\linespread{1.05}

\usepackage[a4paper,margin=1in]{geometry}
\setlength{\parskip}{0.5em}
\setlength{\parindent}{0pt}

\usepackage{fancyhdr}
\pagestyle{fancy}
\fancyhf{}

\lhead{\footnotesize Lectures on Stabilized Ellipsoid Embeddings}
\rhead{\footnotesize \nouppercase{\leftmark}}
\cfoot{\footnotesize \thepage}

\usepackage{xcolor}
\definecolor{linkcol}{HTML}{0B4F6C}
\definecolor{citecol}{HTML}{5C7CFA}
\definecolor{urlcol}{HTML}{0A7C66}
\usepackage[pagebackref=true,colorlinks=true,linkcolor=linkcol,citecolor=citecol,urlcolor=urlcol]{hyperref}
\usepackage[nameinlink,capitalize,noabbrev]{cleveref}

\usepackage{accents}
\usepackage{graphicx}
\usepackage{mathbbol}
\usepackage{tikz-cd}

\setcounter{tocdepth}{2}     
\setcounter{secnumdepth}{3}
\usepackage{tocloft}

\setlength{\cftsecnumwidth}{5em} 

\usepackage{titlesec}
\titleformat{\section}
  {\normalfont\LARGE\bfseries}
  {Lecture~\thesection:}{0.5em}{}
\titlespacing*{\section}{0pt}{1.0em plus .3em}{0.6em}

\titleformat{\subsection}
  {\normalfont\large\bfseries}
  {\S\thesubsection}{0.75em}{}
\titlespacing*{\subsection}{0pt}{0.6em}{0.3em}

\titleformat{\subsubsection}
  {\normalfont\normalsize\bfseries}
  {\S\thesubsubsection}{0.6em}{}
\titlespacing*{\subsubsection}{0pt}{0.5em}{0.2em}

\usepackage{mathtools}
\usepackage{bbm}
\usepackage{thmtools}
\usepackage{float}
\usepackage{tikz}
\usepackage{pgfmath}
\usetikzlibrary{decorations.pathreplacing,calc,arrows.meta,decorations.pathmorphing}

\usepackage[sort&compress]{natbib}
\numberwithin{equation}{section}

\declaretheoremstyle[
  headfont=\bfseries,
  bodyfont=\itshape,
  spaceabove=8pt,spacebelow=6pt,
  headpunct={.},
  postheadspace=0.5em
]{plainclean}
\declaretheoremstyle[
  headfont=\bfseries,
  bodyfont=\normalfont,
  spaceabove=8pt,spacebelow=6pt,
  headpunct={.},
  postheadspace=0.5em
]{defnclean}
\declaretheoremstyle[
  headfont=\bfseries\itshape,
  bodyfont=\normalfont\itshape,
  spaceabove=6pt,spacebelow=6pt,
  headpunct={.},
  postheadspace=0.5em
]{remarkclean}

\theoremstyle{plainclean}
\newtheorem{theorem}{Theorem}[section]

\renewcommand*{\thethmlet}{\Alph{thmlet}}
\newtheorem{proposition}[theorem]{Proposition}

\newtheorem{lemma}[theorem]{Lemma}

\newtheorem{corollary}[theorem]{Corollary}

\newenvironment{theorembox}[1][]
{\refstepcounter{theorem}\begin{tcolorbox}[colback=magenta!2!white,colframe=magenta!30!black,colbacktitle=magenta!5!white,coltitle=purple!90!black,boxrule=0.5pt,arc=3pt,left=8pt,right=8pt,top=6pt,bottom=6pt,fonttitle=\bfseries\boldmath,title=Theorem \thetheorem\ifx\relax#1\relax\else\ (#1)\fi]}
{\end{tcolorbox}}

\newenvironment{theoremconjecturebox}[1][]
{\refstepcounter{theorem}\begin{tcolorbox}[colback=magenta!2!white,colframe=magenta!30!black,colbacktitle=magenta!5!white,coltitle=purple!90!black,boxrule=0.5pt,arc=3pt,left=8pt,right=8pt,top=6pt,bottom=6pt,fonttitle=\bfseries\boldmath,title=Theorem / Conjecture \thetheorem\ifx\relax#1\relax\else\ (#1)\fi]}
{\end{tcolorbox}}

\newenvironment{propositionbox}[1][]
{\refstepcounter{proposition}\begin{tcolorbox}[colback=green!2!white,colframe=green!30!black,colbacktitle=green!5!white,coltitle=green!40!black,boxrule=0.5pt,arc=3pt,left=8pt,right=8pt,top=6pt,bottom=6pt,fonttitle=\bfseries\boldmath,title=Proposition \theproposition\ifx\relax#1\relax\else\ (#1)\fi]}
{\end{tcolorbox}}

\newenvironment{thmletbox}[1][]
{\refstepcounter{thmlet}\begin{tcolorbox}[colback=blue!2!white,colframe=blue!30!black,colbacktitle=blue!5!white,coltitle=blue!90!black,boxrule=0.5pt,arc=3pt,left=8pt,right=8pt,top=6pt,bottom=6pt,fonttitle=\bfseries\boldmath,title=Theorem \thethmlet\ifx\relax#1\relax\else\ (#1)\fi]}
{\end{tcolorbox}}

\newenvironment{propletbox}[1][]
{\refstepcounter{proplet}\begin{tcolorbox}[colback=orange!2!white,colframe=orange!30!black,colbacktitle=orange!5!white,coltitle=orange!90!black,boxrule=0.5pt,arc=3pt,left=8pt,right=8pt,top=6pt,bottom=6pt,fonttitle=\bfseries\boldmath,title=Proposition \theproplet\ifx\relax#1\relax\else\ (#1)\fi]}
{\end{tcolorbox}}

\newenvironment{lemletbox}[1][]
{\refstepcounter{lemlet}\begin{tcolorbox}[colback=cyan!2!white,colframe=cyan!30!black,colbacktitle=cyan!5!white,coltitle=cyan!90!black,boxrule=0.5pt,arc=3pt,left=8pt,right=8pt,top=6pt,bottom=6pt,fonttitle=\bfseries\boldmath,title=Lemma \thelemlet\ifx\relax#1\relax\else\ (#1)\fi]}
{\end{tcolorbox}}

\theoremstyle{defnclean}
\newtheorem{definition}[theorem]{Definition}
\newtheorem{example}[theorem]{Example}
\newtheorem{exercise}[theorem]{Exercise}
\theoremstyle{remarkclean}
\newtheorem{remark}[theorem]{Remark}

\theoremstyle{defnclean}
\newtheorem{notation}[theorem]{Notation}
\newtheorem{terminology}[theorem]{Terminology}
\newtheorem{convention}[theorem]{Convention}
\newtheorem{upshot}[theorem]{Upshot}
\newtheorem{problem}[theorem]{Problem}

\usepackage[many]{tcolorbox}
\tcbset{
  colback=white,
  colframe=black!10!linkcol,
  boxrule=0.6pt,
  arc=3pt,
  left=8pt,right=8pt,top=6pt,bottom=6pt
}

\newenvironment{disclaimer}{%
  \begin{tcolorbox}[colback=red!5!white,colframe=red!75!black,boxrule=0.8pt,arc=3pt,left=8pt,right=8pt,top=6pt,bottom=6pt]
  \textbf{Disclaimer.} }{\end{tcolorbox}}

\usepackage{enumitem}
\setlist{itemsep=0.2em,topsep=0.2em}

\newcommand{\authorname}{\textsc{Kyler Siegel}\footnote{K.S. is partially supported by NSF grant DMS-2506639 and a Simons Foundation Travel Support for Mathematicians Award.}}
\newcommand{\affil}{\textsc{Les Marécottes, Switzerland}}
\newcommand{\lecturedate}{September 2025}
\title{\vspace{-1.5em}\textbf{Lectures on Stabilized Ellipsoid Embeddings}}
\author{\authorname\\ \affil\\ \lecturedate}
\date{}

\newcommand{\R}{\mathbb{R}}
\newcommand{\ra}{\rightarrow}
\newcommand{\om}{\omega}
\newcommand{\1}{\mathbbm{1}}
\newcommand{\fl}{\op{fl}}
\newcommand{\op}[1]{{\operatorname{#1}}}
\newcommand{\veca}{{\vec{a}}}
\newcommand{\T}{\mathbb{T}}
\newcommand{\C}{\mathbb{C}}
\newcommand{\vecb}{{\vec{b}}}
\newcommand{\hooksymp}{\overset{s}\hookrightarrow}
\definecolor{darkmagenta}{rgb}{0.55, 0.0, 0.55}
\newcommand{\hl}[1] {{\boldmath\textbf{{\color{darkmagenta}#1}}}}
\newcommand{\emb}{\mathcal{E}}
\newcommand{\fib}{{\op{Fib}}}
\newcommand{\Q}{\mathbb{Q}}
\newcommand{\weight}{\op{wt}}
\newcommand{\bl}{{\op{Bl}}}
\newcommand{\CP}{\mathbb{CP}}
\newcommand{\ovl}{\overline}
\newcommand{\calC}{\mathcal{C}}
\newcommand{\pd}{\op{PD}}
\newcommand{\Z}{\mathbb{Z}}
\newcommand{\exc}{\op{Exc}}
\newcommand{\al}{\alpha}
\newcommand{\NI}{{\noindent}}

\newcommand{\uvl}{\underline}
\newcommand{\de}{\delta}
\newcommand{\area}{\op{area}}
\newcommand{\ind}{\op{ind}}
\newcommand{\vol}{\op{vol}}
\newcommand{\sss}{\vspace{2.5 mm}}
\newcommand{\step}[1]{\par\medskip\noindent\textbf{Step #1}}
\newcommand{\pp}{{\mathfrak{p}}}
\newcommand{\wt}{\widetilde}
\newcommand{\eps}{\varepsilon}
\makeatletter
\newcommand{\doublewtikz}[2][-0.28ex]{%
  \mathchoice
    {\doublewtikz@{\displaystyle}{#1}{#2}}%
    {\doublewtikz@{\textstyle}{#1}{#2}}%
    {\doublewtikz@{\scriptstyle}{#1}{#2}}%
    {\doublewtikz@{\scriptscriptstyle}{#1}{#2}}%
}
\newcommand{\doublewtikz@}[3]{%
  \tikz[baseline=(base.base)]{
    \node[inner sep=0pt, outer sep=0pt] (base) {$#1 #3$};
    \node[inner sep=0pt, outer sep=0pt, yshift=-0.45ex] (t1) at (base.north)
      {$#1 \widetilde{\phantom{#3}}$};
    \node[inner sep=0pt, outer sep=0pt, yshift={#2-0.45ex}] at (t1.north)
      {$#1 \widetilde{\phantom{#3}}$};
  }%
}
\makeatother
\newcommand{\wtt}[1]{\doublewtikz[-0.2ex]{#1}}
\newcommand{\sm}{\op{sm}}
\newcommand{\Om}{{\Omega}}
\newcommand{\vecD}{\vec{D}}
\newcommand{\calJ}{\mathcal{J}}
\newcommand{\vecm}{{\vec{m}}}
\renewcommand{\lll}{\Langle}
\newcommand{\rrr}{\Rangle}
\newcommand{\calM}{\mathcal{M}}
\newcommand{\ga}{\gamma}
\newcommand{\calA}{\mathcal{A}}
\newcommand{\bdy}{\partial}
\newcommand{\pt}{\op{pt}}
\newcommand{\tor}{{\op{tor}}}
\newcommand{\mm}{\mathfrak{m}}
\newcommand{\calT}{\mathcal{T}}
\newcommand{\mut}{\op{mut}}
\newcommand{\gl}{\op{GL}}
\newcommand{\calB}{\mathcal{B}}
\newcommand{\calS}{\mathcal{S}}
\newcommand{\aut}{\op{Aut}}
\newcommand{\frakd}{\mathfrak{d}}
\newcommand{\calD}{{\mathcal{D}}}
\newcommand{\nn}{\mathfrak{n}}
\newcommand{\mon}{\op{Mon}}
\newcommand{\ks}{{\operatorname{{\mathsf{S}}}}}
\newcommand{\ka}{\kappa}
\newcommand{\MM}{\mathbb{M}}
\newcommand{\lan}{\langle}
\newcommand{\ran}{\rangle}
\newcommand{\acc}{\op{acc}}

\begin{document}
\maketitle
\vspace{-1.5em}
\begin{abstract}
These notes are based on a five-part minicourse on stabilized symplectic embeddings given in Les Marécottes, Switzerland during a September 2025 workshop.\footnote{Special thanks to Felix Schlenk and Grisha Mikhalkin for organizing this workshop.}
Our main goal is to explain the recent resolution of the (restricted) stabilized ellipsoid embedding problem by D. McDuff and the author. Along the way we also introduce various other ideas which shed light on the context and hint at possible generalizations.
Some of the concepts covered include sesquicuspidal curves, symplectic inflation, multidirectional tangency constraints, well-placed curves, cluster transformations, Looijenga pairs, toric models, scattering diagrams, and the tropical vertex theorem.

\end{abstract}

\tableofcontents
\bigskip

\begin{disclaimer}
Many of the mathematical statements in these lectures are somewhat informal simplifications; we recommend consulting the original references for more precise formulations whenever possible.
Also, we apologize in advance for any missing citations, and we welcome any comments, questions, or corrections.
\end{disclaimer}


\section{Overview}\label{sec:overview}

\textit{In this lecture, we first briefly introduce Hamiltonian flows, nonsqueezing phenomena, and the ellipsoid embedding problem in \S\ref{subsec:hamiltonian_flows}. We then recall the celebrated McDuff--Schlenk Fibonacci staircase in \S\ref{subsec:fibonacci_staircase}, and give a brief modern overview of ways to derive it. 
In \S\ref{subsec:full_fillings} we recall some connections between four-dimensional ellipsoid embeddings, symplectic ball packings, and symplectic cones, and as an application we establish full symplectic fillings of the four-ball by sufficiently long ellipsoids.
In \S\ref{subsec:stabilized_embeddings}, we introduce stabilized ellipsoid embeddings and formulate our main result, namely Theorem~\ref{thmlet:A}.
Lastly, in \S\ref{subsec:sesquicuspidal_curves} we introduce sesquicuspidal curves as a key tool for obstructing stabilized ellipsoid embeddings.
We end with a brief sketch in \S\ref{subsec:outline} of the remaining lectures.}

\subsection{Hamiltonian flows and ellipsoid embeddings}\label{subsec:hamiltonian_flows}

Consider a Hamiltonian on $\R^{2n}$, i.e. a smooth function $H: \R^{2n} \ra \R$. 
Its {\em symplectic gradient} (a.k.a. {\em Hamiltonian vector field}) is defined by $\nabla_\om H:= J\nabla H$, where $\nabla$ is the usual gradient and $J$ is the block matrix 
$J = 
\begin{pmatrix}
0 & -\1 \\
\1 & 0
\end{pmatrix}.$
Integrating $\nabla_\om H$ gives rise to the flow $\fl_H^t: \R^{2n} \ra \R^{2n}$ for $t \in \R$.
There are various interesting questions one can ask about Hamiltonian flows, for instance:
\begin{itemize}
    \item  periodic orbits (existence, lower bounds, density, \dots)
    \item  integrability versus chaos (exact solvability, KAM tori)
    \item Poincaré recurrence (or ``super-recurrence'')
\end{itemize}
and so on.
These lectures are related to the so-called ``symplectic nonsqueezing phenomena'' and its generalizations. We first recall some basics.

\begin{propositionbox}[``Liouville's theorem'']
The symplectic gradient $\nabla_\om H$ is divergence-free, i.e. $\fl_H^t$ preserves volume.
\end{propositionbox}

\begin{notation}
For $\veca = (a_1,\dots,a_n) \in \R_{>0}^n$, put $E(\veca) := \left\{ \pi\sum\limits_{i=1}^n \frac{1}{a_i}(x_i^2+y_i^2) \leq 1\right\}$.
\end{notation}

\begin{remark}
Note that $E(a,\dots,a)$ is the ball $B^{2n}(a)$ of radius $\sqrt{a/\pi}$.
Also, if $\mu: \C^n \ra \R_{\geq 0}^n$ denotes the moment map for the standard $\T^n$-action, i.e. $\mu(z_1,\dots,z_n) = (\pi |z_1|^2,\dots,\pi |z_n|^2)$, then $\mu(E(a_1,a_2))$ is the right triangle with vertices $(0,0),(a_1,0),(0,a_2)$ as in Figure~\ref{fig:right_tri}.
\end{remark}

\begin{figure}[h]\label{fig:right_tri}
\centering
\begin{tikzpicture}[scale=1]
\draw[->] (-0.5,0) -- (4,0) node[right] {$x_1$};
\draw[->] (0,-0.5) -- (0,3) node[above] {$x_2$};

\draw[thick, fill=blue!20] (0,0) -- (3,0) -- (0,2) -- cycle;

\fill[black] (0,0) circle (2pt) node[below left] {$(0,0)$};
\fill[black] (3,0) circle (2pt) node[below] {$(a_1,0)$};
\fill[black] (0,2) circle (2pt) node[left] {$(0,a_2)$};

\node at (1.5,-0.3) {$a_1$};
\node at (-0.3,1) {$a_2$};

\draw (0.3,0) -- (0.3,0.3) -- (0,0.3);
\end{tikzpicture}
\caption{The image of $E(a_1,a_2)$ under the moment map $\mu: \C^2 \ra \R_{\geq 0}^2$.}
\end{figure}
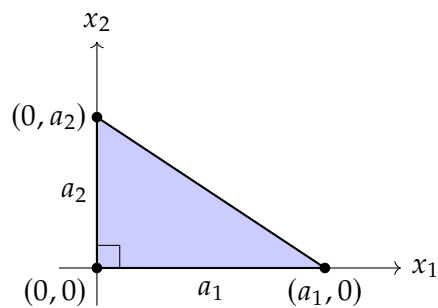

\begin{notation}
  Put $E(\veca) \hooksymp E(\vecb)$ if $\fl_H^1(E(\veca)) \subset E(\vecb)$ for some (time-dependent) $H: \R^{2n} \times [0,1] \ra \R$.
\end{notation}
\begin{remark}
This turns out to be equivalent to abstract symplectic embedding, i.e. a smooth embedding preserving the standard symplectic form, thanks to the ``extension after restriction principle'' -- see e.g. \cite[\S4.4]{Schlenk_old_and_new}. The discrepancy between time-dependent and time-independent Hamiltonian flows is more subtle -- see e.g. \cite{polterovich2016autonomous}.
\end{remark}

\begin{theorembox}[Gromov `85 \cite{gromov1985pseudo}]
If $E(\veca) \hooksymp E(\vecb)$, then we must have $\min \{a_i\} \leq \min \{b_i\}$.
\end{theorembox}

\begin{definition}
For a symplectic manifold $M^{2n}$, the $\hl{ellipsoid embedding function}$ is 
\begin{align*}
\emb_M: \R_{>0}^n \ra \R_{>0},\;\;\;\;\; \emb_M(\veca) := \inf \{ c \; | \; E(\tfrac{1}{c} \cdot \veca) \hooksymp M \}.  
\end{align*}
\end{definition}
\begin{remark}
 Note that we have $\emb_M(c \cdot \veca) = c \emb_M(\veca)$ for any scalar $c \in \R_{>0}$, i.e. $\emb_M(\veca)$ is really a function of just $n-1$ variables.
\end{remark}

\begin{problem}[``ellipsoid embedding problem'' (EEP)]
Compute $\emb_M(\veca)$ for $M = E(\vecb)$.
\end{problem}
\begin{problem}[``restricted ellipsoid embedding problem'' (REEP)]
Compute $\emb_M(\veca)$ for $M = B^{2n}$.
\end{problem}

Here are a few initial comments.
\begin{itemize}
  \item The ellipsoid embedding problem is trivial for $n=1$.
  \item The ellipsoid embedding problem is ``solved'' for $n=2$ by the work of many people (McDuff, Hutchings, Schlenk, Hofer, Biran, Polterovich, Li, \dots). Here ``solved'' means reduced to a deceptively simple combinatorial criterion.
  \item The solution to the restricted ellipsoid embedding problem for $n=2$ was worked out in complete detail by McDuff--Schlenk \cite{McDuff-Schlenk_embedding}.
  \item To our knowledge there is not even a conjecture for the restricted ellipsoid embedding problem for $n \geq 3$ (i.e. a formula for the function $\emb_{B^6}(a_1,a_2,a_3)$).
\end{itemize}

\subsection{The symplectic Fibonacci staircase}\label{subsec:fibonacci_staircase}

\begin{theorembox}[McDuff--Schlenk \cite{McDuff-Schlenk_embedding}]\label{thm:McSch}

The ellipsoid embedding function $\emb_{B^4}(1,a)$ of the four-dimensional ball $B^4$ is as follows:

\begin{figure}[H]
\centering
\includegraphics[width=1\textwidth]{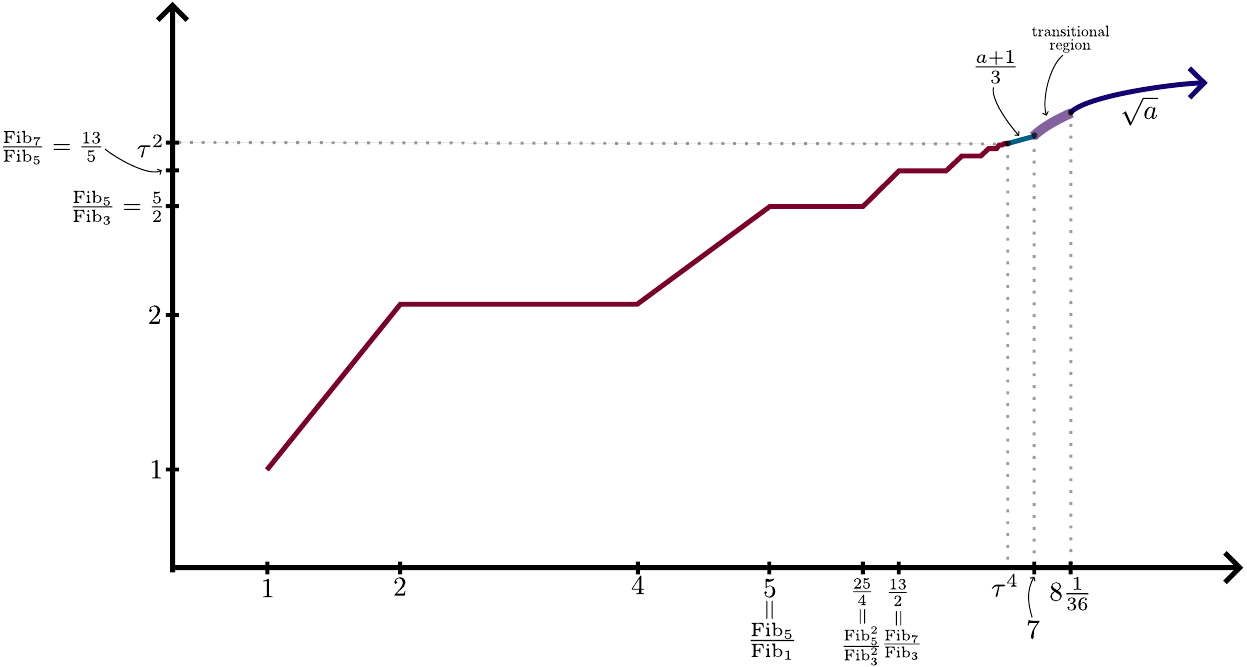}
\caption{The ellipsoid embedding function $\emb_{B^4}(1,a)$ (not to scale).}
\label{fig:fib_stairs}
\end{figure}
\end{theorembox}

While Figure~\ref{fig:fib_stairs} is just a cartoon, the function $\emb_{B^4}(1,a)$ is explicitly computed in \cite{McDuff-Schlenk_embedding} and it includes the following noteworthy features:
\begin{itemize}
  \item it is an ``infinite Fibonacci staircase'' for $a \in [1,\tau^4]$, with infinitely many steps accumulating at $\tau^4 := \tfrac{1}{2}(7+3\sqrt{5}) \approx 6.85$, where the $x$ and $y$ coordinates of the outer and inner corners are ratios of Fibonacci numbers (or their squares)
  \item for $a \in [8\tfrac{1}{36},\infty)$, we have $\emb_{B^4}(1,a) = \sqrt{a}$, i.e. volume is the only embedding obstruction.
  \item it is the linear function $\tfrac{1}{3}(a+1)$ on the segment $a \in [\tau^4,7]$, and in the range $a \in [7,8\tfrac{1}{36}]$ it exhibits transitional behavior.
\end{itemize}

Let $\fib_i$ denote the $i$th Fibonacci number:
\[\begin{array}{|c|c|c|c|c|c|c|c|c|c|c|c|}
\hline
i & -1 & 0 & 1 & 2 & 3 & 4 & 5 & 6 & 7 & \cdots \\ \hline
\fib_i & 1 & 0 & 1 & 1 & 2 & 3 & 5 & 8 & 13 & \cdots \\ \hline
\end{array}
\]
\begin{exercise}\label{ex:determined_by_corners}
Show that $\emb_{B^4}(1,a)$ is nondecreasing and $\frac{1}{a}\cdot\emb_{B^4}(1,a)$ is nonincreasing. Conclude that $\emb_{B^4}(1,a)$ for $a \in [1,\tau^4]$ is entirely characterized by:
\begin{itemize}
  \item embedding obstructions: 
  \begin{equation}
    \emb_{B^4}\left(1,\frac{\fib_{2k+5}}{\fib_{2k+1}}\right) \geq \frac{\fib_{2k+5}}{\fib_{2k+3}} \label{eq:emb_obstr}
  \end{equation}
  \item embedding constructions: 
  \begin{equation}
    \emb_{B^4}\left(1,\frac{\fib_{2k+3}^2}{\fib_{2k+1}^2}\right) \leq \frac{\fib_{2k+5}}{\fib_{2k+1}} \label{eq:emb_constr}
  \end{equation}
\end{itemize}
\end{exercise}

There are multiple ways of establishing \eqref{eq:emb_obstr} and \eqref{eq:emb_constr}.
For \eqref{eq:emb_obstr}, all of the main approaches are based on \hl{pseudoholomorphic curves}.\footnote{It is, however, interesting to ask how much of this story could be recovered using generating functions or microlocal sheaves.}
While the method of extracting symplectic embedding obstructions from pseudoholomorphic curves is by now fairly streamlined, there are many different approaches for {\em producing} such curves, for instance:
\begin{itemize}
  \item applying Cremona transformations to basic exceptional curves
  \item embedded contact homology
  \item cluster symmetries.
\end{itemize}
In these lectures we will primarily explain the last bullet above, as it is the most directly connected to our primary goal.

For \eqref{eq:emb_constr}, there are several methods for constructing symplectic embeddings, most notably:
\begin{itemize}
  \item inflation along pseudoholomorphic curves
  \item almost toric fibrations.
\end{itemize}
While almost toric fibrations (ATFs) provide a very visually and conceptually appealing approach to the embeddings for \eqref{eq:emb_constr}, we will primarily focus on the first bullet above in these lectures, partly because the ATF approach to embeddings is already discussed very thoroughly elsewhere (see e.g. \cite{symington71four,evans2023lectures,cristofaro2020infinite,casals2022full}) and partly because it allows us to reduce both \eqref{eq:emb_obstr} and \eqref{eq:emb_constr} to the existence of certain singular symplectic curves in the complex projective plane.
There are then multiple methods for constructing the curves relevant for inflation approach to \eqref{eq:emb_constr}: Seiberg--Witten theory, tropical methods (see e.g. \cite[\S5.3]{mcduff2024singular}), etc.

\subsection{Full fillings by long ellipsoids}\label{subsec:full_fillings}

We associate to each positive rational number $a \in \Q_{>1}$ its \hl{weight sequence} $\weight(a) = (w_1,\dots,w_k)$, whose definition is more or less explained by the ``box diagram'' in Figure~\ref{fig:box_diagram}, which divides a rectangle of height $1$ and width $a$ into a collection of squares.

\begin{example}
For $a = 17/5$, the corresponding weight sequence is $\weight(17/5) = (\underbrace{1,1,1}_3,\underbrace{2/5,2/5}_2,\underbrace{1/5,1/5}_2).$ Note that:
\begin{itemize}
  \item $\frac{17}{5} = 3 + \frac{1}{2 + \frac{1}{2}}$, i.e. the multiplicities of the entries in the weight sequence give the continued fraction expansion
  \item $1 \geq w_1 \geq w_2 \geq \cdots \geq w_k$
  \item $w_1^2 + \cdots + w_k^2 = a$.
\end{itemize}
Each of these properties holds for general $a \in \Q_{>1}$.
\end{example}

\begin{figure}[H]
\centering
\begin{tikzpicture}[scale=4,>=Stealth]

  \pgfmathsetmacro{\H}{1}          
  \pgfmathsetmacro{\W}{17/5}       
  \pgfmathsetmacro{\twf}{2/5}      
  \pgfmathsetmacro{\owf}{1/5}      

  \draw[thick] (0,0) rectangle ({\W},{\H});

  \foreach \x in {1,2,3} {
    \draw[thick] (\x,0) -- (\x,{\H});
  }
  
  \fill[blue!10] (0,0) rectangle (1,{\H});
  \fill[blue!10] (1,0) rectangle (2,{\H});
  \fill[blue!10] (2,0) rectangle (3,{\H});
  
  \draw[thick] (0,0) rectangle (1,{\H});
  \draw[thick] (1,0) rectangle (2,{\H});
  \draw[thick] (2,0) rectangle (3,{\H});

  \draw[thick] (3,{\twf}) -- ({\W},{\twf});       
  \draw[thick] (3,{2*\twf}) -- ({\W},{2*\twf});   

  \draw[thick] ({3+\owf},{2*\twf}) -- ({3+\owf},{\H});
  
  \fill[green!10] (3,0) rectangle ({\W},{\twf});
  \fill[green!10] (3,{\twf}) rectangle ({\W},{2*\twf});
  
  \fill[red!10] (3,{2*\twf}) rectangle ({3+\owf},{\H});
  \fill[red!10] ({3+\owf},{2*\twf}) rectangle ({\W},{\H});
  
  \draw[thick] (3,0) rectangle ({\W},{\twf});
  \draw[thick] (3,{\twf}) rectangle ({\W},{2*\twf});
  
  \draw[thick] (3,{2*\twf}) rectangle ({3+\owf},{\H});
  \draw[thick] ({3+\owf},{2*\twf}) rectangle ({\W},{\H});


  \draw[<->] (0,-0.16) -- ({\W},-0.16)
    node[midway,fill=white,inner sep=1pt] {$\frac{17}{5}$};

  \draw[<->] (-0.08,0) -- (-0.08,1)
    node[midway,fill=white,inner sep=1pt,rotate=90] {$1$};

  \draw[<->] (3,-0.08) -- ({3+\twf},-0.08)
    node[midway,fill=white,inner sep=1pt] {$\frac{2}{5}$};

  \draw[<->] ({\W+0.10},{2*\twf}) -- ({\W+0.10},{\H})
    node[midway,fill=white,inner sep=1pt,rotate=90] {$\frac{1}{5}$};

\end{tikzpicture}
\caption{The box diagram giving the weight sequence for $a=17/5$.}
\label{fig:box_diagram}
\end{figure}
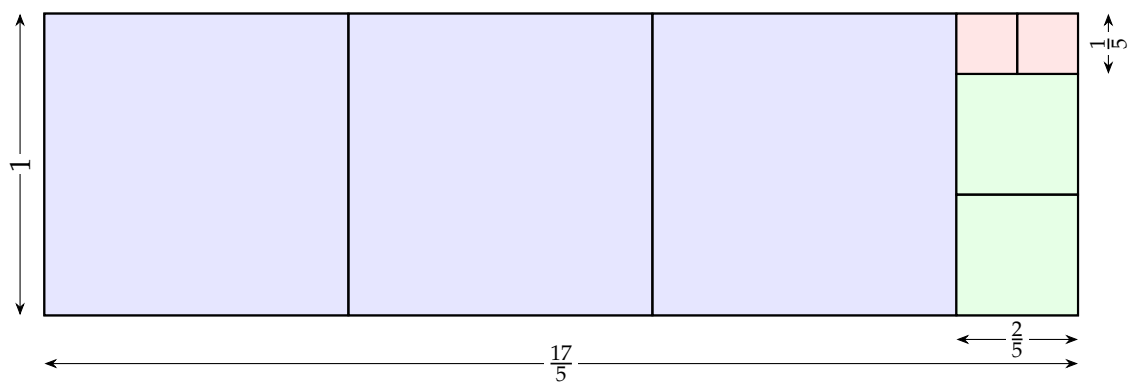

\begin{theorembox}[McDuff \cite{mcduff2009symplectic}]
For each $a \in \Q_{> 1}$, say with weight sequence $\weight(a) = (w_1,\dots,w_k)$, we have 
\begin{align*}
E(1,a) \hooksymp B^4(c) \;\;\;\Longleftrightarrow\;\;\; \bigsqcup_{i=1}^k B^4(w_i) \hooksymp B^4(c).
\end{align*}

\end{theorembox}

\begin{notation}
Put:
\begin{itemize}
  \item $\bl^k\CP^2 := \CP^2 \# (\underbrace{\ovl{\CP}^2 \# \cdots \# \ovl{\CP}^2}_k)$, i.e. the $k$-fold blowup of $\CP^2$ as a smooth oriented manifold
  \item $H_2(\bl^k\CP^2) = \Z\langle \ell,e_1,\dots,e_k\rangle$, with $\ell$ the line class and $e_1,\dots,e_k$ the obvious exceptional classes
  \item $\calC(\bl^k\CP^2) := \left\{ [\omega] \in H^2(\bl^k\CP^2)\; \left|\, 
    \begin{array}{l}
      \omega \text{ symplectic form on } \bl^k\CP^2, \\[0.3em]
      c_1(\omega) = \pd(3\ell - e_1 - \cdots - e_k)
    \end{array}
    \right.\right\}$,\; the ``symplectic cone with standard first Chern class''.
\end{itemize}
\begin{theorembox}[McDuff--Polterovich \cite{mcduff1994symplectic}]
 We have 
 \begin{align*}
\bigsqcup_{i=1}^k B^4(w_i) \hooksymp B^4(c) \;\;\;\Longleftrightarrow\;\;\; \pd(c\ell - w_1e_1 - \cdots - w_ke_k) \in \calC(\bl^k\CP^2).
 \end{align*}
\end{theorembox}

\begin{notation}
Let $\exc(\bl^k\CP^2)$ denote the set of all homology classes $e \in H_2(\bl^k\CP^2)$ such that $e$ is represented by a symplectically embedded sphere with self-intersection number $-1$.
\end{notation}
\begin{remark}
Note that a priori the definition of $\exc(\bl^k\CP^2)$ depends on the symplectic form on $\bl^k\CP^2$, but in fact one can show that
it is independent of the choice of symplectic form in $\calC(\bl^k\CP^2)$ (this uses a Gromov compactness argument and the fact that any two $\om,\om' \in \calC(\bl^k\CP^2)$ are deformation equivalent\footnote{Here we say that two symplectic forms $\om_1,\om_2$ on a smooth manifold $M$ are \hl{deformation equivalent} if there is a homotopy through symplectic forms connecting $\om_1$ to $\phi^*\om_2$, where $\phi$ is a diffeomorphism of $M$. For $M = \bl^k\CP^2$, it is actually not known whether this is equivalent to the existence of a homotopy through symplectic forms connecting $\om_1$ to $\om_2$ (we thank D. McDuff for pointing this out).}).
\end{remark}
\begin{remark}
 Note that for $e = d\ell - m_1e_1 - \cdots - m_ke_k \in \exc(\bl^k\CP^2)$, we have 
\begin{align}
 e \cdot e = d^2 - m_1^2 - \cdots - m_k^2 = -1 \;\;\;\;\;\text{and}\;\;\;\;\;
c_1(e) = 3d - m_1 - \cdots - m_k = 1.
\end{align}
\end{remark}
\begin{theorembox}[\cite{mcduff_from_def_to_iso,biran1997symplectic,li2001uniqueness,li_li_2002}]
We have 
\begin{align*}
\calC(\bl^k\CP^2) = \left\{\al \in H^2(\bl^k\CP^2)\;\left|\; \begin{aligned}
&\hspace{3.5em}\al \cdot \al > 0, \\
&\langle \al,e\rangle > 0 \;\forall\; e \in \exc(\bl^k\CP^2)
\end{aligned}\right.\right\}.
\end{align*}
\end{theorembox}
\begin{corollary}
For $a \in \Q_{> 1}$, say with $\weight(a) = (w_1,\dots,w_k)$, we have
\begin{align*}
\emb_{B^4}(1,a) = \max \left( \sqrt{a}, \sup\left\{ \frac{m_1w_1+\cdots+m_kw_k}{d}\;\Big|\; d\ell - m_1e_1 - \cdots - m_ke_k \in \exc(\bl^k\CP^2)  \right\} \right )
\end{align*}
\end{corollary}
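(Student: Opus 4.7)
The plan is to chain together the three theorems stated immediately above and then compute the resulting positivity inequalities explicitly for the weight-sequence cohomology class. By McDuff's theorem, $E(1,a) \hooksymp B^4(c)$ if and only if $\bigsqcup_{i=1}^k B^4(w_i) \hooksymp B^4(c)$, and by McDuff--Polterovich this is in turn equivalent to $\al_c := \pd(c\ell - w_1 e_1 - \cdots - w_k e_k)$ lying in $\calC(\bl^k\CP^2)$. Thus $\emb_{B^4}(1,a)$ equals the infimum of those $c > 0$ for which $\al_c \in \calC(\bl^k\CP^2)$, and the task is to transcribe this condition via the explicit description of $\calC(\bl^k\CP^2)$.

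Next, I would apply the cone characterization, which says $\al_c \in \calC(\bl^k\CP^2)$ iff $\al_c \cdot \al_c > 0$ and $\langle \al_c, e\rangle > 0$ for every $e \in \exc(\bl^k\CP^2)$. Using the standard intersection form $\ell \cdot \ell = 1$, $\ell \cdot e_i = 0$, $e_i \cdot e_j = -\delta_{ij}$, together with the weight-sequence identity $w_1^2 + \cdots + w_k^2 = a$, the first condition becomes $c^2 - a > 0$, i.e.\ $c > \sqrt{a}$. For an exceptional class $e = d\ell - m_1 e_1 - \cdots - m_k e_k$, the pairing is $\langle \al_c, e\rangle = cd - (m_1 w_1 + \cdots + m_k w_k)$, which for $d \geq 1$ is positive exactly when $c > (m_1 w_1 + \cdots + m_k w_k)/d$. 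The remaining case $d = 0$ is controlled by the Diophantine constraints $d^2 - \sum m_i^2 = -1$ and $3d - \sum m_i = 1$: these force some single $m_j = -1$ with all other $m_i = 0$, so the $d = 0$ exceptional classes are exactly $e_1,\ldots,e_k$, and for these $\langle \al_c, e_j\rangle = w_j > 0$ holds automatically, imposing no constraint.

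Assembling the above, $\al_c \in \calC(\bl^k\CP^2)$ iff $c$ strictly exceeds both $\sqrt{a}$ and every ratio $(m_1 w_1 + \cdots + m_k w_k)/d$ arising from an exceptional class with $d \geq 1$. Taking the infimum over such $c$ yields the stated formula. The only mild delicacy — hardly an obstacle — is the bookkeeping needed to pass between the open (strict) inequalities defining $\calC(\bl^k\CP^2)$ and the infimum/supremum in the definition of $\emb_{B^4}$; this is routine once one observes that the $d = 0$ exceptional classes drop out. Beyond that, the corollary is purely a reformulation of the three preceding theorems.
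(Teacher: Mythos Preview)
Your argument is correct and is exactly what the paper intends: the corollary is stated without proof as an immediate consequence of the three preceding theorems, and you have carried out that derivation carefully, including the $d=0$ bookkeeping. One tacit assumption worth a word is that every exceptional class has $d \geq 0$ (so that ``the remaining case'' really is $d=0$); this holds by positivity of intersection with a $J$-holomorphic representative of the line class $\ell$, and the paper itself leaves it implicit.
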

\end{notation}

\begin{lemma}
We have $\emb_{B^4}(1,a) = \sqrt{a}$ for all $a \geq 9$.  
\end{lemma}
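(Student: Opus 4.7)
The plan is to invoke the corollary immediately preceding the statement and reduce everything to a one-line bound on exceptional classes. By that corollary,
\[
\emb_{B^4}(1,a) = \max\left(\sqrt{a},\; \sup_{d\ell - \sum m_i e_i \in \exc(\bl^k\CP^2)} \frac{\sum_i m_i w_i}{d}\right),
\]
where $(w_1,\dots,w_k) = \weight(a)$ and the supremum is taken over exceptional classes with $d \geq 1$ (the classes with $d = 0$ are represented by the exceptional divisors themselves and give only the trivial constraint $w_i > 0$, so they can be discarded). Since $\sqrt{a}$ is always present in the max, it suffices to show that every term in the supremum is at most $\sqrt{a}$ when $a \geq 9$.

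The bound will come from two elementary inputs already visible in the paper. First, the first Chern number constraint $c_1(e) = 3d - m_1 - \cdots - m_k = 1$, recorded in the remark, yields $\sum_i m_i = 3d - 1$. Second, the box diagram description of $\weight(a)$ shows $w_i \leq 1$ for every $i$ (the inequality $1 \geq w_1 \geq \cdots \geq w_k$ is noted in the example). Combined with the standard positivity fact that exceptional classes with $d \geq 1$ have $m_i \geq 0$ (apply positivity of intersection to a generic $J$ making each $e_i$ represented by a $J$-holomorphic sphere), one immediately obtains
\[
\frac{\sum_i m_i w_i}{d} \leq \frac{\sum_i m_i}{d} = \frac{3d-1}{d} = 3 - \frac{1}{d} < 3.
\]

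For $a \geq 9$ we have $\sqrt{a} \geq 3$, so the supremum is strictly dominated by $\sqrt{a}$ and the outer max collapses to $\sqrt{a}$, finishing the argument. I do not expect any genuine obstacle: the calculation is a one-liner once the corollary is in place. The only mildly delicate point is the sign assertion $m_i \geq 0$, which is not explicitly recorded in the notation box but is a standard consequence of Gromov compactness applied after inflating $\omega \in \calC(\bl^k\CP^2)$ to make each exceptional class $e_i$ $J$-holomorphically representable; if one wished to avoid this input, one could instead split the sum into positive and negative parts and re-derive the same $3 - 1/d$ bound after absorbing the negative contributions into the Chern number constraint, at the cost of slightly more bookkeeping.
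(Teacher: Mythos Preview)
Your proof is correct and follows essentially the same route as the paper's: reduce via the corollary to bounding $\tfrac{\sum m_i w_i}{d}$, use $w_i \leq 1$ and $\sum m_i = 3d-1$ to get the bound $3 - \tfrac{1}{d} < 3 \leq \sqrt{a}$. You are in fact slightly more careful than the paper in making explicit the need for $m_i \geq 0$ and in writing $\sqrt{a} \geq 3$ rather than $\sqrt{a} > 3$ (which matters at the endpoint $a=9$).
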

\begin{proof}
 It suffices to establish $\frac{m_1w_1 + \cdots + m_kw_k}{d} \leq \sqrt{a}$ for all $d\ell - m_1e_1 - \cdots - m_ke_k \in \exc(\bl^k\CP^2)$.
 Observe that we have
 \begin{align*}
\frac{m_1w_1 + \cdots + m_kw_k}{d} \leq \frac{m_1 + \cdots + m_k}{d} = \frac{3d-1}{d} < 3 < \sqrt{a}.
 \end{align*}
\end{proof}

\subsection{Stabilized ellipsoid embeddings}\label{subsec:stabilized_embeddings}

\begin{definition}
  For a symplectic manifold $M^{2n}$ and $N \in \Z_{\geq 1}$, the \hl{stabilized ellipsoid embedding function} is
  \begin{align}\label{eq:stab_less_unstab}
\emb_M^N: \R_{>0}^n \ra \R_{>0},\;\;\;\;\; \emb_M^N(\veca) := \inf \{ c \; | \; E(\tfrac{1}{c}\cdot \veca)  \times \R^{2N} \hooksymp M \times \R^{2N}\}.    
  \end{align}
\end{definition}
\begin{remark}
  Note that by taking the identity map in the extra dimensions, we always have 
  \begin{align*}
  \emb_M^N(\veca) \leq \emb_M(\veca).
  \end{align*}
\end{remark}
\begin{problem}[``stabilized ellipsoid embedding problem'' (SEEP)]
Compute $\emb_M^N(\veca)$ for $M = E(\vecb)$ and $N \in \Z_{\geq 1}$.
\end{problem}
\begin{problem}[``restricted stabilized ellipsoid embedding problem'' (RSEEP)]
Compute $\emb_M^N(\veca)$ for $M = B^{2n}$ and $N \in \Z_{\geq 1}$.
\end{problem}

\begin{remark}
Note that we can think of $E(a_1,\dots,a_n) \times \R^{2N}$ as $E(a_1,\dots,a_n,\underbrace{\infty,\dots,\infty}_N)$,
and hence the stabilized ellipsoid embedding problem is essentially a special case of the corresponding unstabilized ellipsoid embedding problem (but now in higher dimensions).
\end{remark}

Our primary goal in these lectures is to explain the proof of the following theorem.
\begin{thmletbox}[\cite{sesqui}]\label{thmlet:A}
 For all $N \in \Z_{\geq 1}$, the stabilized ellipsoid embedding of the four-dimensional ball is given by
 \begin{align*}
  \emb_{B^4}^N(1,a) =
  \begin{cases}
    \emb_{B^4}(1,a) &  a \in [1,\tau^4]\\
    \frac{3a}{a+1} & a > \tau^4
  \end{cases}
  \end{align*}
\end{thmletbox}
\NI See Figure~\ref{fig:stab_fib} for an illustration.

\begin{figure}[H]
\centering
\includegraphics[width=0.9\textwidth]{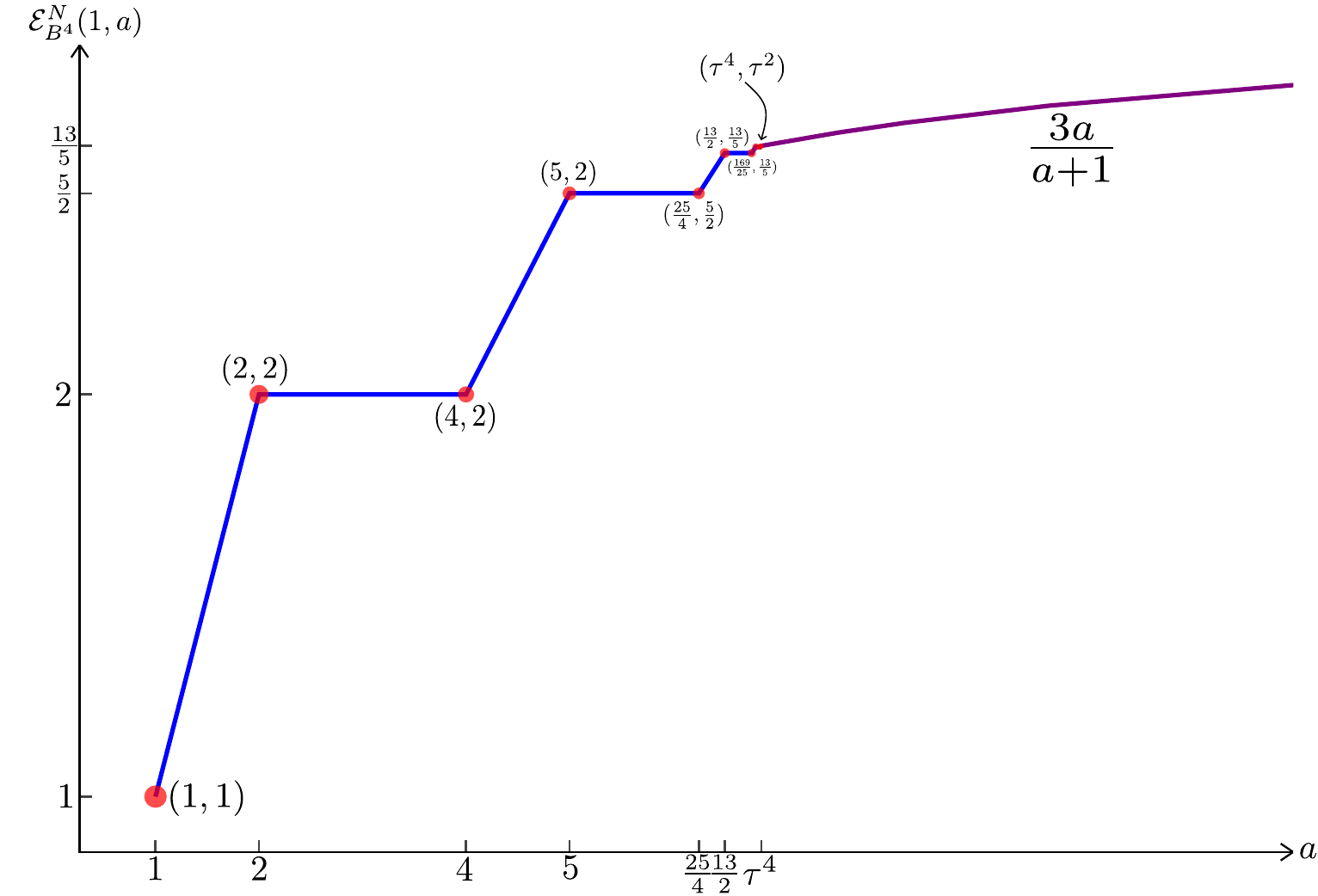}
\caption{The stabilized ellipsoid embedding function $\emb_{B^4}^N(1,a)$ (taken from \cite{sesqui}).}
\label{fig:stab_fib}
\end{figure}

In brief, the function $\emb_{B^4}^N(1,a)$ has a ``subcritical regime'' which is an infinite staircase and a ``supercritical regime'' which is a smooth rational function, with the phase transition at $a = \tau^4$. 
 One can check that the rational function $\tfrac{3a}{a+1}$ meets the infinite Fibonacci staircase precisely at its outer corners and is otherwise above it for $a \in [1,\tau^4)$, whereas it lies below $\sqrt{a}$ for $a > \tau^4$.
Note that the volume bound $\sqrt{a}$ evidently no longer holds for the stabilized problem, i.e. the supercritical behavior is a truly higher dimensional phenomenon.
\begin{remark}
There were important earlier partial results by subsets of Hind, Kerman, Cristofaro-Gardiner, McDuff, and the author (see e.g. \cite{HK,CGH,Ghost,Mint,chscI}).
In particular, the function for $a \in [1,\tau^4]$ was worked out in \cite{CGH}, which by Exercise~\ref{ex:determined_by_corners} and \eqref{eq:stab_less_unstab} boils down to showing that the inequality \eqref{eq:emb_obstr} persists after stabilization.
\end{remark}
The upper bounds for Theorem~\ref{thmlet:A} are given by an explicit ``symplectic folding'' construction due to Hind, which adapts Guth's polydisk embedding construction in \cite{Guth_polydisks}.
\begin{theorembox}[Hind \cite{hind2015some}]
For each $a \in \R_{\geq 1}$ and $N \in \Z_{\geq 1}$ we have 
\begin{align}
E(1,a) \times \R^{2N} \hooksymp B^4(c) \times \R^{2N}\;\;\;\;\;\text{for}\;\;\;\;\; c > \frac{3a}{a+1}.
\end{align}
\end{theorembox}

Thus, our main goal is to find optimal lower bounds for $\emb_{B^4}^N(1,a)$, especially for $a > \tau^4$.

\subsection{Unicuspidal and sesquicuspidal curves}\label{subsec:sesquicuspidal_curves}

Let $p,q$ be positive integers satisfying $\gcd(p,q) = 1$ and $p > q \geq 2$.
\begin{definition}\label{def:sesqui}
A \hl{$(p,q)$-sesquicuspidal symplectic curve} in a symplectic four-manifold $M^4$ is a subset $C \subset M$ such that
\begin{itemize}
  \item $C$ has one $(p,q)$ cusp singularity (i.e. modeled locally on $\{x^p + y^q = 0\} \subset \C^2$)
  \item $C$ has finitely many additional positive ordinary double points (i.e. modeled locally on $\{xy = 0\} \subset \C^2$)
  \item $C$ is otherwise symplectically embedded.
\end{itemize}
\end{definition}
\begin{remark}
  The second condition in Definition~\ref{def:sesqui} holds after a generic perturbation, and the number of double points $\uvl{\de}$ away from the distinguished cusp is fixed by adjunction, namely 
\begin{align}
  \uvl{\de} = \tfrac{1}{2}(C \cdot C + \chi(C) - c_1(C) - (p-1)(q-1)).
\end{align}
In the case $\uvl{\de} = 0$, the curve is called \hl{unicuspidal}.\footnote{The term sesquicuspidal comes from Latin sesqui meaning ``one and a half'', reflecting the fact that sesquicuspidal curves are only mildly more singular than unicuspidal curves.}
Note that we can similarly define $(p,q)$-sesquicuspidal {\em algebraic} curves, which in particular give examples of $(p,q)$-sesquicuspidal symplectic curves.
\end{remark}
\begin{definition}
  We will define the (real) \hl{index} of a $(p,q)$-sesquicuspidal curve $C \subset M$ to be $\ind(C) := 2(c_1(C) - p -q)$.
\end{definition}
We will see later that this is the expected (real) dimension of the moduli space of $(p,q)$-sesquicuspidal curves in a fixed homology class and with fixed ``position'' of the distinguished cusp.
\begin{propletbox}[\cite{SDEP,cusps_and_ellipsoids}]\label{proplet:B}
Given a rational index zero $(p,q)$-sesquicuspidal symplectic curve $C$ in a closed symplectic four-manifold $M^4$ with coprime $p \geq q \geq 1$, we have 
\begin{align}\label{eq:bound_from_sesqui}
\emb_M^N(p,q) \geq \frac{pq}{\area(C)}
\end{align}
for all $N \in \Z_{\geq 0}$.
\end{propletbox}

Here $\area(C)$ denotes the symplectic area of $C$, which depends only on its homology class $[C] \in H_2(M)$.
Note that, after rescaling, \eqref{eq:bound_from_sesqui} can be written equivalently as $\emb_M^N(1,p/q) \geq \frac{p}{\area(C)}$.

The above proposition allows us to reduce Theorem~\ref{thmlet:A} to the following key construction of singular algebraic curves.
\begin{thmletbox}[\cite{sesqui}]\label{thmlet:C}
  There exists a rational index zero $(p,q)$-sesquicuspidal symplectic curve $C \subset \CP^2$ with coprime $p \geq q \geq 1$ if and only if 
  \begin{enumerate}[label=(\alph*)]
    \item $p/q = \frac{\fib_{2k+5}}{\fib_{2k+1}}$ for $k \in \Z_{\geq -1}$, or
    \item $p/q > \tau^4$. 
  \end{enumerate}
 \end{thmletbox} 
\begin{remark}
Note that our convention is such that the first two cases in Theorem~\ref{thmlet:C} are $p/q = 2/1$ and $p/q = 5/1$.
Strictly speaking the cases in Theorem~\ref{thmlet:C} with $q=1$ are trivial, since then a $(p,q)$ is simply a nonsingular point, but it is convenient to include them in the statement because they still give important symplectic embedding obstructions via Proposition~\ref{proplet:B}.
\end{remark}
\begin{exercise}\label{ex:outer_corners_from_curves}
Combining Proposition~\ref{proplet:B} with Theorem~\ref{thmlet:C} gives precisely the lower bounds on $\emb_{B^4}^N$ needed for Theorem~\ref{thmlet:A}.
\end{exercise}
Thus in the remaining lectures we will focus on the proofs of Proposition~\ref{proplet:B} and Theorem~\ref{thmlet:C}, albeit with various detours.

\subsection{Outline of the remaining lectures}\label{subsec:outline}

Here is the plan for the remaining lectures:
\begin{itemize}
  \item   In Lecture~\ref{sec:enumerative_geometry}, we discuss the enumerative geometry of sesquicuspidal curves, and we use this formalism to sketch a proof of Proposition~\ref{proplet:B}.
  \item In Lecture~\ref{sec:well_placed_curves}, we define well-placed curves and discuss their cluster symmetries, and we give one construction of the ``outer corner curves'' alluded to in \S\ref{subsec:fibonacci_staircase}.
  \item In Lecture~\ref{sec:scattering_diagrams}, we give a brief crash course in scattering diagrams and formulate the tropical vertex theorem proved in \cite{gross2010tropical}.
  \item Finally, in Lecture~\ref{sec:putting_together} we put all of this together to give the proof of Theorem~\ref{thmlet:C}.
 \end{itemize}

\begin{remark}
While much of the theory developed in this lectures can be extended in various directions, for instance by replacing $\CP^2$ with other del Pezzo surfaces, for simplicity we will mostly focus our exposition on the case of the complex projective plane. 
\end{remark}

\section{Enumerative geometry of sesquicuspidal curves}\label{sec:enumerative_geometry}

In this lecture, we first recall in \S\ref{subsec:corner_curves} a classification theorem for rational unicuspidal plane curves, and we introduce the inner and outer curves. Then in \S\ref{subsec:inflation} we discuss inflation along singular curves and how the inner corner curves produce symplectic embeddings. 
Finally, in \S\ref{subsec:tangency_constraints} we discuss a general enumerative framework for pseudoholomorphic curves with cusp singularities, and we use this to sketch the proof of a generalization of Proposition~\ref{proplet:B}.

\subsection{The inner and outer corner curves}\label{subsec:corner_curves}

Let us begin by recalling the following result as motivation.
\begin{theorembox}[\cite{fernandez2006classification}]\label{thm:bob_et_al}
 There exists a $(p,q)$-unicuspidal rational algebraic curve in $\CP^2$ of degree $d$ with $p > q\geq 2$ and $\gcd(p,q) = 1$ if and only if $(d,p,q)$ is one of the following:
 \begin{enumerate}[label=(\alph*)]
    \item $(p,q) = (d,d-1)$ for $d \geq 3$
    \item $(p,q) = (2d-1,d/2)$ for $d \geq 4$ even
    \item $(p,q) = (\fib_{2k+3}^2,\fib_{2k+1}^2)$ and $d = \fib_{2k+3}\fib_{2k+1}$ for $k \geq 1$
    \item $(p,q) = (\fib_{2k+5},\fib_{2k+1})$ and $d = \fib_{2k+3}$ for $k \geq 1$
    \item $(p,q) = (22,3)$ and $d = 8$
    \item $(p,q) = (43,6)$ and $d = 16$. 
  \end{enumerate} 
\end{theorembox}
Recall that the index of such a curve $C$ is given by 
\begin{align*}
\ind(C) = 2(c_1(C) - p - q) = 2(3d-p-q).
\end{align*}
\begin{exercise}
 The indices of the curves in Theorem~\ref{thm:bob_et_al} are as follows:
 \[
\begin{array}{c|c|c|c|c|c|c}
 & (a) & (b) & (c) & (d) & (e) & (f) \\
\hline
\text{index} & 2d + 2 & d + 2 & 2 & 0 & -2 & -2
\end{array}
\]
 
\end{exercise}

Observe that, in (c), $\frac{\fib_{2k+3}^2}{\fib_{2k+1}^2}$ is precisely the $x$-value of the $k$th inner corner point in the Fibonacci staircase from Theorem~\ref{thm:McSch}, so we will refer to these as \hl{inner corner curves}. Similarly, in (d), $\frac{\fib_{2k+5}}{\fib_{2k+1}}$ is precisely the $x$-value of the $k$th outer corner point, and we will refer to these as \hl{outer corner curves}.
Note also that the outer corner curves have index zero, so by Proposition~\ref{proplet:B} their existence gives
\begin{align*}
\emb_{B^4}^N\left(1,\frac{\fib_{2k+5}}{\fib_{2k+1}}\right) \geq \frac{\fib_{2k+5}}{\fib_{2k+3}},
\end{align*}
where $\tfrac{\fib_{2k+5}}{\fib_{2k+3}}$ is precisely the $y$-value of the $k$th outer corner point.

\begin{remark} Here are a few more remarks about the curve families (a),(b),(c),(d),(e),(f) in Theorem~\ref{thm:bob_et_al}:
\begin{itemize}
  \item family (a) is given by $\{ZY^{d-1} = X^d\}$
  \item family (b) is given by $\{(ZY-X^2)^{d/2} = XY^{d-1}\}$
  \item Miyanishi--Sugie \cite{Miyanishi-Sugie} or Kashiwara \cite{kashiwara_hiroko} construct a pencil with general member (c) and special member (d)
  \item Orevkov \cite{orevkov2002rational} constructs families (d),(e),(f) (we will explain a related construction in \S\ref{subsec:shift_reflection_symmetries} below)
  \item families (e),(f) are a bit ``surprising'' to a symplectic geometer in the sense that they should not persist under a generic perturbation of the almost complex structure.
\end{itemize}
  
\end{remark}

\subsection{Inflation along sesquicuspidal curves}\label{subsec:inflation}

The following proposition, which is essentially an application of the technique of \hl{symplectic inflation}, constructs four-dimensional symplectic ellipsoid embeddings from singular curves.
While it is not directly related to the proof of Theorem~\ref{thmlet:C}, it does help illustrate the role connection between sesquicuspidal curves and (stabilized) symplectic embeddings.

\begin{propletbox}\label{proplet:D}
Let $M$ be a closed symplectic four-manifold and $C \subset M$ a $(p,q)$-sesquicuspidal symplectic curve such that $[C] = c \pd([\om_M])$ for some $c \in \R_{>0}$ and $[C] \cdot [C] \geq pq$.
Then $\emb_M(p,q) \leq c$ (i.e. $\emb_M(1,p/q) \leq c/q$).
\end{propletbox}

\begin{remark}
 Note that we have
 \begin{align*}
  \vol(M) = \tfrac{1}{2} \int \om_M \wedge \om_M = \tfrac{1}{2} \pd([\om_M]) \cdot \pd([\om_M]) = \tfrac{1}{2c^2} [C] \cdot [C].
  \end{align*} 
In the case $[C] \cdot [C] = pq$, Proposition~\ref{proplet:D} corresponds to an embedding $E\left(\tfrac{p}{c},\tfrac{q}{c}\right) \hooksymp M$ (up to small error), which is a full filling since $\vol(E\left(\tfrac{p}{c},\tfrac{q}{c}\right)) = \tfrac{pq}{2c^2} = \vol(M)$.
\end{remark}

\begin{exercise}\label{ex:inner_corners_from_curves}
Show that the inner corner curves (i.e. family (c) from Theorem~\ref{thm:bob_et_al}) satisfy $[C]\cdot [C] = d^2 = pq$, and these translate into full fillings via Proposition~\ref{proplet:D}.
\end{exercise}

Combining Exercise~\ref{ex:outer_corners_from_curves} and Exercise~\ref{ex:inner_corners_from_curves}, we see that the entire (stabilized) Fibonacci staircase $\emb_{B^4}^N(1,a)$ for $a \in [1,\tau^4]$ and $N \in \Z_{\geq 0}$ follows from the existence of the outer and inner corner unicuspidal curves (i.e. families (c) and (d) in Theorem~\ref{thm:bob_et_al}).

\sss

Although Proposition~\ref{proplet:D} will suffice for our purposes here, let us also mention that symplectic inflation can also be performed along curves with more complicated singularities, for example a \hl{$k$-fold $(p,q)$-multicusp}, i.e. the singularity locally modeled on 
\[\left\{\prod_{j=1}^{k} (x^p - C_j y^p) = 0\right\} \subset \C^2\]
for some (pairwise distinct) constants $C_j \in \C$.
In fact, the following theorem shows that the existence of such a curve is {\em equivalent} to the existence of a corresponding ellipsoid embedding.

\begin{theorembox}[Ophstein \cite{opshtein2015symplectic}]
Let $M^4$ be a symplectic four-manifold with rational symplectic form class $[\om_M] \in H^2(M;\Q)$, and fix $\tau \in \Q_{>0}$ and  relatively prime $p,q \in \Z_{\geq 1}$. Then we have $E(\tau p,\tau q) \hooksymp M$ if and only if there exists an irreducible symplectic curve $C \subset M$ which has a $k\tau$-fold $(p,q)$-multicusp and satisfies $[C] = k\pd([\om_M])$ and $[C] \cdot [C] \geq k\tau p q$, for some $k \in \Z_{\geq 1}$.
\end{theorembox}

\sss

We end this subsection with a proof sketch of Proposition~\ref{proplet:D}, which introduces the method of symplectic inflation and also some useful pictures around resolutions of singularities.

\begin{proof}[Proof sketch of Proposition~\ref{proplet:D}] \hfill

\step{1}: We first resolve the cusp singularity of $C$ by a $(p,q)$-weighted symplectic blowup as in Figure~\ref{fig:res1}, giving a new curve $\wt{C} \subset \wt{M}$.
In more detail, this means that we start with a neighborhood of the cusp point $\pp$ which is symplectomorphic to a neighborhood of the form $\mu^{-1}(U) \subset \R^4$, where $U$ is a neighborhood of the origin on $\R^2_{\geq 0}$ and $\mu: \R^4 \ra \R^2_{\geq 0}$ is the standard moment map.
We may assume that this symplectomorphism identifies $C$ locally near $\pp$ with the pre-image under $\mu$ of a ray in $\R^2_{\geq 0}$ through the origin with direction $(p,q)$. We can assume that $\mu^{-1}(U)$ contains the ellipsoid $E(\eps p,\eps q)$ for some $\eps > 0$ sufficiently small.

The weighted blowup $\wt{M} := \bl_{p,q}M$ is then obtained by symplectically cutting out the ellipsoid $E(\eps p,\eps q)$, i.e. replacing the local toric picture with $2$ edges with one with $3$ edges as in the right side of Figure~\ref{fig:res1}.
Here $\wt{C} \subset \wt{M}$ is the strict transform of $C$, given by suitably truncating the line segment $\mu(C)$.
Note that $\wt{C}$ no longer has a $(p,q)$ cusp but may still have some additional double points, while $\wt{M}$ has (at most) two orbifold points, due to the two non-Delzant corners in the local toric picture.

\begin{figure}[H]
\centering
\includegraphics[width=1\textwidth]{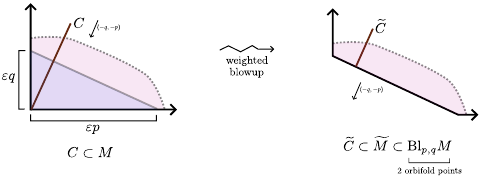}
\caption{The local toric picture for the $(p,q)$-weighted symplectic blowup $M \rightsquigarrow \wt{M}$.}
\label{fig:res1}
\end{figure}

\step{2} (optional): Although the orbifold points are not necessarily problematic, it is sometimes preferable to avoid them, which we can do by further resolving $\wt{M}$. In terms of the local toric picture, this corresponds to adding many small edges so that all of the vertices become Delzant -- see Figure~\ref{fig:res2}. We denote the resulting nonsingular symplectic manifold by $\wtt{M}$ and the new (essentially unaffected) curve by $\wtt{C} \subset \wtt{M}$.

\begin{figure}[H]
\centering
\includegraphics[width=1\textwidth]{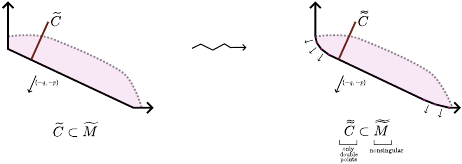}
\caption{The local toric picture for the resolution of orbifold singularities $\wt{M} \rightsquigarrow \wtt{M}$.}
\label{fig:res2}
\end{figure}

\begin{example}
Figure~\ref{fig:res3} shows a concrete example of the process of going from $\wt{M}$ to $\wtt{M}$ by resolving the two orbifold points.
In the local toric pictures, $\wt{M}$ has edges with outward normals $(-1,0),(-2,-3),(0,-1)$, while and $\wtt{M}$ has two new edges with outward normals $(-1,-1)$ and $(-1,-2)$. One can easily check that all the corners for $\wtt{M}$ are Delzant.

\begin{figure}[H]
\centering
\includegraphics[width=1\textwidth]{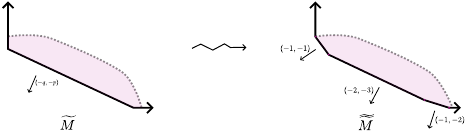}
\caption{The orbifold resolution process $\wt{M} \rightsquigarrow \wtt{M}$ in the case $(p,q) = (3,2)$.}
\label{fig:res3}
\end{figure}
\end{example}

\begin{remark}
 Note that the process of going from $C \subset M$ to $\wtt{C} \subset \wtt{M}$ is a symplectic version of embedded resolution of singularities for a $(p,q)$ cusp. The combinatorics of this resolution process are neatly encoded in the box diagram for $(p,q)$.
 For example, the box diagram for $(p,q) = (3,2)$ is shown in Figure~\ref{fig:box_diagram_32}, with the $3$ boxes corresponding to the $2$ small added edges in Figure~\ref{fig:res3} -- see \cite[\S4.1]{cusps_and_ellipsoids} for more details.

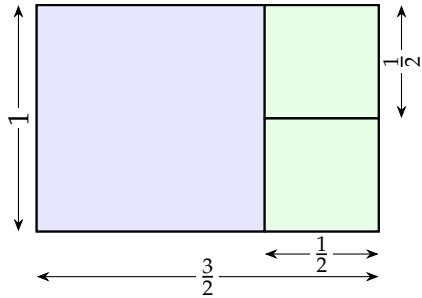
\begin{figure}[H]
\centering
\begin{tikzpicture}[scale=3,>=Stealth]

  \pgfmathsetmacro{\H}{1}        
  \pgfmathsetmacro{\W}{3/2}      
  \pgfmathsetmacro{\hf}{1/2}     

  \draw[thick] (0,0) rectangle ({\W},{\H});

  \draw[thick] (1,0) -- (1,{\H});

  \fill[blue!10] (0,0) rectangle (1,{\H});

  \draw[thick] (0,0) rectangle (1,{\H});

  \draw[thick] (1,{\hf}) -- ({\W},{\hf});   

  \fill[green!10] (1,0) rectangle ({\W},{\hf});
  \fill[green!10] (1,{\hf}) rectangle ({\W},{\H});

  \draw[thick] (1,0) rectangle ({\W},{\hf});
  \draw[thick] (1,{\hf}) rectangle ({\W},{\H});


  \draw[<->] (0,-0.20) -- ({\W},-0.20)
    node[midway,fill=white,inner sep=1pt] {$\frac{3}{2}$};

  \draw[<->] (-0.08,0) -- (-0.08,1)
    node[midway,fill=white,inner sep=1pt,rotate=90] {$1$};

  \draw[<->] (1,-0.10) -- ({1+\hf},-0.10)
    node[midway,fill=white,inner sep=1pt] {$\frac{1}{2}$};

  \draw[<->] ({\W+0.10},{\hf}) -- ({\W+0.10},{\H})
    node[midway,fill=white,inner sep=1pt,rotate=90] {$\frac{1}{2}$};

\end{tikzpicture}

\caption{The box diagram for $(p,q) = (3,2)$.}
\label{fig:box_diagram_32}
\end{figure}

\end{remark}

\end{proof}

\step{3}: We now smooth the remaining double points of $\wtt{C}$. This is straightforward in the symplectic category, and locally modeled on trading $\{xy = 0\}$ for $\{xy = \de\}$ in $\C^2$ for $\de > 0$ small. We denote the resulting curve by $\wtt{C}_\sm \subset \wtt{M}$.

\step{4}: We can find a closed two-form $\Om$ on $\wtt{M}$ which is supported in a small neighborhood of $\wtt{C}_\sm$ and satisfies $[\Om] = \pd(\wtt{C}_\sm)$. Here it is necessary that $\wtt{C}_\sm$ has nonnegative self-intersection number, which comes from our assumption $[C] \cdot [C] \geq pq$. Note that finding $\Om$ is straightforward in the special case that $\wtt{C}_\sm$ has zero self-intersection number, since then it has a trivial symplectic normal bundle and we can take $\Om$ to be simply a two-form pulled back from the normal direction.

\step{5}: We now ``inflate'' $\wtt{M}$ by replacing its symplectic form $\om_{\wtt{M}}$ with the new one 
$\om^s_{\wtt{M}} := \om_{\wtt{M}} + s\Om$ for $s \in \R_{>0}$. Let $E$ denote the symplectic divisor in $\wtt{M}$ corresponding to the edge in the local toric picture with outward normal direction $(-p,-q)$ as in Figure~\ref{fig:res4}. Note that $E$ has symplectic area approximately $\eps$ with respect to $\om_{\wtt{M}}$, given by the affine length of the corresponding edge in the local toric picture.
Then $E$ has area approximately $\eps + s$ with respect to the symplectic form $\om^s_{\wtt{M}}$.

\begin{figure}[H]
\centering
\includegraphics[width=.5\textwidth]{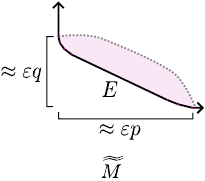}
\caption{The edge $E \subset \wtt{M}$ along which we blow down.}
\label{fig:res4}
\end{figure}

\step{6}: We now reverse the blowup process $M \rightsquigarrow \wtt{M}$ by blowing down $E$ and the small edges introduced in Step 2, which gives back $M$ but now with a symplectic form $\om_M^s$. 
This can be again understood as a straightforward local toric substitution,
but observe that the symplectic manifold $(M,\om_M^s)$ now naturally contains the ellipsoid $E([\eps+s]p,[\eps+s]q)$,
and we have $[\om_M^s] = [\om_M] + s\pd(C) = (1+sc)[\om_M^s]$ in $H^2(M;\R)$.
After rescaling by the factor $1+sc$ and applying Moser's stability theorem, we thus have a symplectic embedding
\begin{align*}
E\left(\tfrac{\eps+s}{1+sc}p,\tfrac{\eps+s}{1+sc}q\right) \hooksymp (M,\om_M).
\end{align*}
Taking $s \ra \infty$, we get an embedding
$E(\tfrac{p}{c'},\tfrac{q}{c'}) \hooksymp (M,\om_M)$ for $c'$ arbitrarily close to $c$.

\subsection{Multidirectional tangency constraints}\label{subsec:tangency_constraints}

Let $M^{2n}$ be a closed $2n$-dimensional symplectic manifold, and $\vecD = (D_1,\dots,D_n)$ be a collection of nonsingular local symplectic divisors which have a symplectic normal crossing intersection at a point $\pp \in M$ (that is, they look locally like $\{z_1\cdots z_n = 0\} \subset \C^n$ in symplectic Darboux coordinates).

\begin{notation}
Let $\calJ(M,\vecD)$ denote the set of tame almost complex structures on $M$ which are integrable near $\pp$ and preserve $J_1,\dots,J_n$.
\end{notation}
Put $z_0 := [0:1] \in \CP^1$.

\begin{definition}
  Given an almost complex structure $J \in \calJ(M,\vecD)$, a homology class $A \in H_2(M)$, and a tuple $\vecm = (m_1,\dots,m_n) \in \Z_{\geq 1}^n$, let $\calM^J_{M,A}\lll \calC_{\vecD}^{\vecm} \pp \rrr$ denote the moduli space of $J$-holomorphic maps $u: \CP^1 \ra M$ which satisfy $[u] = A$, $u(z_0) = \pp$, and such that $u$ has contact order at least $m_i$ with $D_i$ at $z_0$ for $i = 1,\dots,n$, modulo biholomorphic reparametrizations of $\CP^1$ fixing $z_0$.
\end{definition}

Here the contact order constraints $\lll \calC_{\vecD}^{\vecm}\pp\rrr$ with $D_1,\dots,D_n$ is called a \hl{multidirectional tangency constraint}.

\begin{remark}\label{rmk:multidir}\hfill
\begin{itemize}
  \item When $\vecm = (m,\underbrace{1,\dots,1}_{n-1})$, this reduces to the $m$-fold \hl{local tangency constraint} (with respect to $D := D_1$) studied by Cieliebak--Mohnke and others (see e.g. \cite{CM_SFT_compactness,CM2,tonk,McDuffSiegel_counting}).
  \item If $u: \CP^1 \ra M^4$ has contact order exactly $p$ with $D_1$ and exactly $q$ with $D_2$, then it necessarily has a $(p,q)$ cusp singularity at $\pp$ (assuming no other branches of $u$ meet $\pp$).
\end{itemize}
\end{remark}

The following theorem combines some results in \cite{cusps_and_ellipsoids}, which builds on some technical results in \cite{SDEP}.
\begin{thmletbox}\label{thmlet:E} Fix a semipositive symplectic manifold $M^{2n}$ and homology class $A \in H_2(M)$, and put $\vecm = (p,q,\underbrace{1,\dots,1}_{n-2})$ for some coprime $p,q \in \Z_{\geq 1}$ satisfying $p+q = c_1(A)$. 
\begin{enumerate}[label=(\roman*)]
  \item For generic $J \in \calJ(M,\vecD)$, the moduli space $\calM_{M,A}^J\lll \calC_{\vecD}^{\vecm}\pp\rrr$ is finite and regular, and the signed count $\#_{M,A}^{\vecm} := \# \calM_{M,A}^J\lll \calC_{\vecD}^{\vecm}\pp\rrr$ is independent of $\pp,\vecD,J$.
  \item For any other closed symplectic manifold $Q^{2N}$ such that $M^{2n} \times Q^{2N}$ is semipositive, we have
  \begin{align*}
  \#_{M\times Q,A \times [\op{pt}]}^{(\vecm,\vec{1})}= \#_{M,A}^{\vecm},
  \end{align*}
  where $(\vecm,\vec{1}) = (m_1,\dots,m_n,\underbrace{1,\dots,1}_N)$.
  \item If $\#_{M,A}^\vecm \neq 0$, then we have 
\begin{align*}
  \emb_M^N(p,q,a_3,\dots,a_n) \geq \frac{pq}{\area(A)}
\end{align*}
whenever $a_3,\dots,a_n > pq$.
\item If $2n=4$, then $\#_{M,A}^\vecm \in \Z_{\geq 0}$, and if there exists a rational $(p,q)$-sesquicuspidal symplectic curve $C \subset M$ with $[C] = A$, then $\#_{M,A}^\vecm > 0$.
\end{enumerate}
\end{thmletbox} 

Here semipositivity is a technical topological assumption which allows us to achieve generic transversality without virtual techniques. 
One could think of Theorem~\ref{thmlet:E} as a generalization of the approach to semipositive Gromov--Witten invariants taken in \cite{JHOL}.

\begin{proof}[Proof of Proposition~\ref{proplet:B}]
This follows directly from parts (iii) and (iv) of Theorem~\ref{thmlet:E}.
\end{proof}

\begin{remark}
 There is also a version of Theorem~\ref{thmlet:E} for more general $\vecm \in \Z^n_{\geq 1}$, but the combinatorial assumptions on $\vecm$ and $c_1(A)$ are more complicated to state -- see \cite{cusps_and_ellipsoids}. 
The special case of Theorem~\ref{thmlet:E} for local tangenc constraints was considered earlier in \cite{McDuffSiegel_counting}. 
\end{remark}

\begin{example}\label{ex:multidir1}
According to Remark~\ref{rmk:multidir}(i), $\#_{\CP^2,d\ell}^{(3d-1,1)}$ is the count of degree $d$ rational plane curves satisfying an order $3d-1$ local tangency constraint.
These were computed in \cite{McDuffSiegel_counting} as follows:
\[\begin{array}{|c|c|c|c|c|c|}
\hline
d & 1 & 2 & 3 & 4 & 5 \\ \hline
\#_{\CP^2,d\ell}^{(3d-1,1)} & 1 & 1 & 4 & 26 & 217 \\ \hline
\end{array}
\]
Note that this sequence appears as A364973 in the On-line Encyclopedia of Integer Sequences, where it is also seen to make important appearances in the mirror symmetry literature.
\end{example}
\begin{example}\label{ex:multidir2}
The counts corresponding to the outer corner curves in $\CP^2$ are all equal to $1$:
\[\begin{array}{|c|c|c|c|c|c|}
\hline
d & 1 & 2 & 3 & 4 & 5 \\ \hline
\#_{\CP^2,\fib_{2k+3}\ell}^{(\fib_{2k+5},\fib_{2k+1})} & 1 & 1 & 1 & 1 & 1 \\ \hline
\end{array}
\]
\end{example}
The general philosophy with Examples~\ref{ex:multidir1} and \ref{ex:multidir2} is that unicuspidal curves tend to be unique (after imposing a suitable multidirectional tangency constraint), whereas sesquicuspidal curve counts tend to grow rapidly.

\sss

We end this lecture by briefing giving some of the ideas underlying Theorem~\ref{thmlet:E}.

\begin{proof}[Proof ideas for Theorem~\ref{thmlet:E}]\hfill
\begin{enumerate}[label=(\roman*)]
  \item The standard approach is to use Gromov's compactness theorem, but here we need extra care to rule out additional ``bad degenerations''. For instance, a curve with a $(11,5)$ cusp could degenerate into two components, one with an $(8,3)$ cusp and one with a $(3,2)$ cusp, which meet at the singular point as in Figure~\ref{fig:cusp_deg}.

\begin{figure}[h]
\centering
\includegraphics[width=0.6\textwidth]{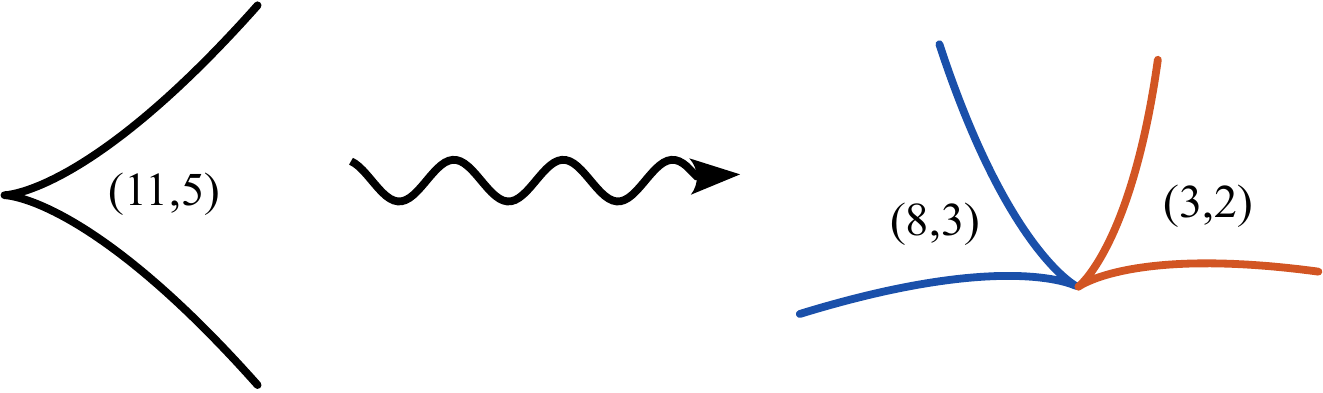}
\caption{Degeneration of a curve with a $(11,5)$ cusp}
\label{fig:cusp_deg}
\end{figure}

The following is our key tool for ruling out such degnerations.
\begin{lemletbox}[{``hidden constraints'' {\cite[\S3.2]{cusps_and_ellipsoids}}}] \label{lem:hid_con}
  
  If $\lll \calC^\vecm \rrr$ degenerates into $\lll \calC^{\vecm_1} \rrr,\dots,\lll \calC^{\vecm_k} \rrr$ then we must have
  \begin{align}\label{eq:hid_con}
  \sum\limits_{i=1}^k \min \{a_1m_1^i,\dots,a_nm_n^i\} \geq \min \{ a_1m_1,\dots,a_nm_n \}
  \end{align}
for any $\veca = (a_1,\dots,a_n) \in \R_{>0}^n$.
Here we put $\vecm = (m_1,\dots,m_n)$ and $\vecm_i = (m_1^i,\dots,m_n^i)$ for $i=1,\dots,k$,
\end{lemletbox}
For example, taking $\veca = (5,11)$ rules out the hypothetical degeneration depicted in Figure~\ref{fig:cusp_deg}.

The rough heuristic idea behind Lemma~\ref{lem:hid_con} is as follows. After surrounding the cusp by an ellipsoid $E := E(\eps \cdot \veca)$ for $\eps > 0$ sufficiently small and stretching the neck along $\bdy E$, the resulting degeration picture involves a rational punctured curve $C$ in the symplectization $\R \times \bdy E$ with one negative asymptotic Reeb orbit $\ga$ and many positive asymptotic Reeb orbits $\ga_1,\dots,\ga_k$. An explicit dictionary between cusp singularities and Reeb orbits in $\bdy E$ is given in \cite{SDEP}, and in particular nonnegativity of $C$ gives the action inequality $\sum\limits_{i=1}^k \calA_{\bdy E}(\ga_i) \geq \calA_{\bdy E}(\ga)$, which translates directly into ~\eqref{eq:hid_con}. 

\begin{remark}
The proof of Lemma~\ref{lem:hid_con} relies on neck stretching and the SFT compactness theorem. It would be interesting to see if there is a more algebraic or classical argument.
\end{remark}

  \item We argue using a product almost complex structure on $M \times Q$:
  \begin{itemize}
    \item using some elementary\underline{Fredholm theory}, one argues that regular curves $C \subset M$ remain regular as curves in $M \times Q$
    \item using some basic \underline{intersection theory}, one shows that all relevant curves in $M \times Q$ must be entirely contained in $M \times \{\pt\}$. 
  \end{itemize}
  Together, these essentially show that our curve counts are unaffected by the stabilization process.

\item We will assume $2n=4$ for simplicity. Suppose that we have an embedding $E\left(\frac{p}{c},\frac{q}{c}\right) \hooksymp M$. By assumption we have a curve $C$ with a $(p,q)$ cusp and $[C] = A$, which looks locally near the cusp like the left hand side of Figure~\ref{fig:res1} with respect to an ellipsoid embedding $E(\eps p, \eps q) \hooksymp M$. 
By Exercise~\ref{ex:symp_area}, the portion of the curve lying over the triangle with vertices $(0,0),(\eps p,0),(0,\eps q)$ has symplectic area $\eps pq$.
It is not difficult to show that we can find a family of ellipsoid embeddings $E(sp,sq) \hooksymp M$, $s \in [\eps,1/c]$, which interpolates between these two embeddings (using e.g. the ``extension after restriction principle'' -- see \cite[\S4.4]{Schlenk_old_and_new}).
By a compactness argument which extends (i) above, we can then find another curve $C'$ with the same local picture, but where the triangle now has vertices $(0,0),(p/c,0),(0,q/c)$. Since the portion of the curve lying over this triangle has symplectic area $\frac{pq}{c}$ (again using Exercise~\ref{ex:symp_area}), we must have $\area(C) > \frac{pq}{C}$.

\begin{exercise}\label{ex:symp_area}
Let $T$ be the triangle in $\R^2_{\geq 0}$ with vertices $(0,0),(rp,0),(0,rq)$ for some $r \in \R_{>0}$ and coprime $p,q \in \Z_{\geq 1}$, and let $\ga \subset \R_{\geq 0}^2$ be the shortest line segment from the origin to the hypotenuse of $T$. Let $C \subset \R^4$ be the visible symplectic curve whose moment map image is $\ga$, such that each fiber is a geodesic in the direction $(p,q)$ in the corresponding torus. Show that $\area(C) = rpq$. {\em Hint: see \cite[\S5.1a]{mcduff2024singular} for more on visible symplectic curves and their areas.}
\end{exercise}

\item This follows by a version of automatic transversality in dimension four, in the sense of \cite{hofer1997genericity,Wendl_aut}. More precisely, a well-known corollary of automatic transversality is that sufficiently nice curves count positively (c.f. \cite[\S5.2]{mcduff2021symplectic}).
\end{enumerate}

\end{proof}

\section{Well-placed curves and their symmetries}\label{sec:well_placed_curves}

In this lecture, we first recall the basics of Looijenga pairs in \S\ref{subsec:looijenga_pairs}, their toric models in \S\ref{subsec:toric_models}, and elementary transformations between toric models in \S\ref{subsec:elementary_transformations}.
We then define well-placed curves in \S\ref{subsec:well_placed_curves_def}, which on the one hand give sesquicuspidal curves and on the other hand are related to curves in nontoric blowups of toric surfaces.
We end by discussing cluster-type symmetries for well-placed curves in \S\ref{subsec:shift_reflection_symmetries}.

\subsection{Looijenga pairs}\label{subsec:looijenga_pairs}

\begin{definition} \hfill
\begin{itemize}
  \item A \hl{log Calabi--Yau pair} $(X,D)$ is a smooth complex projective variety $X$ with a reduced normal crossing divisor $D$ such that $[D] = \pd(c_1(X))$.
  \item A \hl{Looijenga pair} is a log Calabi--Yau pair $(X,D)$ such that $\dim_\C X = 2$ and $D$ is nodal with at least one node.
  \item A Looijenga pair $(X,D)$ is \hl{uninodal} if $D$ has exactly one node.
\end{itemize}
\end{definition}
\begin{exercise}
Show that a Looijenga pair $(X,D)$ being uninodal is equivalent to $D$ being irreducible. More precisely, if $D$ is a nodal anticanonical divisor in a smooth complex projective surface, then $D$ is irreducible if and only if it has either (a) geometric genus zero and exactly one node or (b) geometric genus one and no nodes. {\em Hint: apply the adjunction formula.}
\end{exercise}

\begin{example}\label{ex:CP2_N0}
  Letting $N \subset \CP^2$ denote a nodal elliptic curve, $(\CP^2,N)$ is a uninodal Looijenga pair.
  Up to projective equivalence, we may assume that $N$ is our preferred nodal cubic $N_0 = \{X^3 + Y^3 + XYZ = 0\}$.
  \end{example}
\begin{example}\label{ex:CP2_E}
  Letting $D \subset \CP^2$ denote a smooth elliptic curve, $(\CP^2,D)$ is a log Calabi--Yau pair but not a Looijenga pair.
\end{example}
\begin{example}\label{ex:CP2_3_lines}
 Letting $D \subset \CP^2$ denote the union of three lines in general position, $(\CP^2,D)$ is a Looijenga pair but not uninodal. 
\end{example}
\begin{example}\label{ex:CP2_edge_toric}
  Generalizing the above example, any toric pair $(X_\tor,D_\tor)$ is a Looijenga pair, where $X_\tor$ is a smooth complex toric surface and $D_\tor$ is its toric boundary divisor.
\end{example}

\begin{lemma} \hfill

\begin{enumerate}[label=(\arabic*)]
  \item\label{item:corner_blowup} For any Looijenga pair $(X,D)$, let $X'$ be the blowup at a node of $X$, and let $D'$ be the reduced total transform of $D$. Then $(X',D')$ is also a Looijenga pair.
  \item \label{item:edge_blowup}For any Looijenga pair $(X,D)$, let $X'$ be the blowup at a smooth point of $D$, and let $D'$ be the strict transform of $D$. Then $(X',D')$ is also a Looijenga pair.
\end{enumerate}
\end{lemma}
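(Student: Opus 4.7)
The plan is to verify in each case the two required conditions separately: (i) the log Calabi--Yau identity $[D'] = \pd(c_1(X'))$, and (ii) that $D'$ is nodal with at least one node. The only inputs needed are the standard blowup identity $c_1(X') = \pi^* c_1(X) - \pd(E)$ (where $\pi \colon X' \to X$ is the blowup and $E$ the exceptional divisor), the multiplicity formula $\pi^* D = \wt{D} + m E$ with $m$ the multiplicity of $D$ at the blown-up point, and a careful local analysis of $D'$ near $E$.

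For part \ref{item:corner_blowup}, the multiplicity of a nodal divisor at one of its nodes is $2$, so $\pi^* D = \wt{D} + 2E$, and hence the reduced total transform satisfies $[D'] = \pi^*[D] - [E]$. Using the log Calabi--Yau hypothesis $[D] = \pd(c_1(X))$ for the original pair, this gives $[D'] = \pi^* \pd(c_1(X)) - [E] = \pd(c_1(X'))$ as desired. Locally near the chosen node, the two smooth branches of $D$ get separated: each meets $E$ transversely at one point, producing exactly two new nodes in place of the original one, while every other node of $D$ persists unchanged. So $D'$ is nodal with strictly more nodes than $D$, hence with at least two.

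For part \ref{item:edge_blowup}, the multiplicity of $D$ at a smooth point $p$ of $D$ is $1$, so $\pi^* D = \wt{D} + E$ and the strict transform has class $[D'] = \pi^*[D] - [E] = \pd(c_1(X'))$ by the same computation. Since $E$ is not a component of $D'$, it only remains to check that $\wt{D}$ is still a normal crossing divisor; this is immediate because $\wt{D}$ meets $E$ transversely at a single smooth point which is not in $D'$. The nodes of $D'$ are then precisely the nodes of $D$, all of which lie away from $p$, so $D'$ has at least one node. The whole argument is essentially bookkeeping of classes and local pictures; the only subtlety worth flagging is correctly distinguishing the reduced total transform in \ref{item:corner_blowup} from the strict transform in \ref{item:edge_blowup}, which correspond to the multiplicity values $2$ and $1$ respectively and make the class calculation work out identically in both cases.
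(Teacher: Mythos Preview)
The paper states this lemma without proof, treating it as a standard fact about Looijenga pairs. Your argument is correct and is exactly the routine verification one would supply: the canonical bundle formula $K_{X'} = \pi^*K_X + E$ together with the multiplicity formula $\pi^*D = \wt{D} + mE$ (with $m=2$ at a node and $m=1$ at a smooth point) makes the class identity $[D'] = \pd(c_1(X'))$ automatic in both cases, and the local pictures give the nodal condition.

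One small phrasing slip in part~(2): you write that $\wt{D}$ meets $E$ at a single smooth point ``which is not in $D'$'', but of course this point \emph{is} in $D' = \wt{D}$. What you presumably mean is that $E$ is not a component of $D'$, so this intersection point is simply a smooth point of $D'$ rather than a new node. With that clarification the argument is complete.
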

Here reduced total transform means we take the full preimage of $D$ under the blowup map $X' \ra X$, whereas strict transform means we throw out the exceptional divisor.

We will call the process in \ref{item:corner_blowup} a \hl{corner blowup} and the one in \ref{item:edge_blowup} an \hl{edge blowup} -- see Figure \ref{fig:corner_and_edge_blowups}.

\begin{figure}[htbp]
\centering
\includegraphics[width=0.8\textwidth]{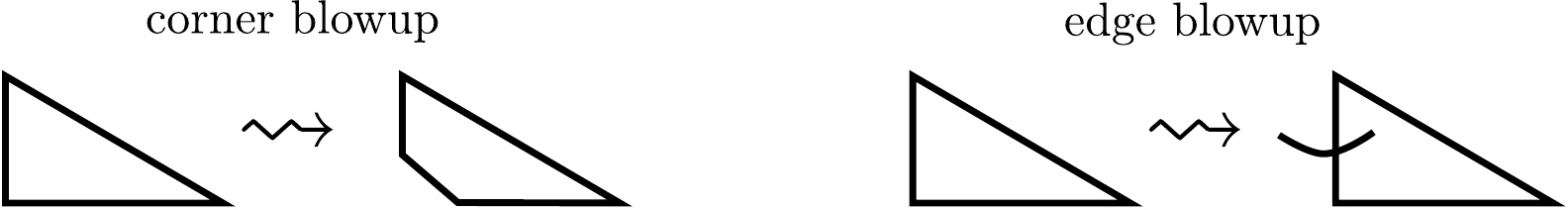}
\caption{Corner and edge blowups of Looijenga pairs.}
\label{fig:corner_and_edge_blowups}
\end{figure}

\begin{remark}
  Note that corner blowups of a Looijenga pair $(X,D)$ do not affect the \hl{interior} $X \setminus D$, but edge blowups do, with most of the new exceptional divisor lying in the new interior.
\end{remark}

\begin{terminology} 
We will call any iterated edge blowup of a toric pair an \hl{edge toric pair}.
\end{terminology}

\begin{remark} 
  Edge toric pairs can be thought of as a natural algebraic counterpart to symplectic almost toric fibrations.
  In the language of Gross--Hacking--Keel \cite{GHK_cluster}, Looijenga interiors $X \setminus D$ are precisely two-dimensional cluster varieties.

\end{remark}
We will see that the interior of every Looijenga pair can be identified with the interior of an edge toric pair.
In fact, there are usually many different such identifications, each called a {\em toric model}, and they are all related by a simple procedure called {\em elementary transformations}.

\subsection{Toric models}\label{subsec:toric_models}

\begin{definition}
  A \hl{toric model} $\calT$ for a Looijenga pair $(X,D)$ is:
\[
\begin{tikzcd}
(X_\tor,D_\tor) & (\wt{X},\wt{D}) \arrow[l, "\pi_e"'] \arrow[r, "\pi_c"] & (X,D),
\end{tikzcd}
\]
where \begin{itemize}
  \item $(X_\tor,D_\tor)$ is a toric pair
  \item $\pi_e$ is a composition of edge blowdowns
  \item $\pi_c$ is a composition of corner blowdowns.
\end{itemize}
\end{definition}
Informally, this says that $(X,D)$ becomes an edge toric pair after some corner blowups.

\begin{example}\label{ex:T0}
 Here is our most central example of a toric model for these lectures.
 Let $(\CP^2,N_0)$ be the uninodal Looijenga pair from Example~\ref{ex:CP2_N0}, where $N_0$ is a nodal cubic curve.
As illustrated in Figure~\ref{fig:toric_model_CP2}, after $3$ corner blowups this becomes identified with the edge blowup of $F_3$ at $2$ points.
Recall that the third Hirzebruch surface $F_3$ has moment polygon given by the quadrilateral depicted in Figure~\ref{fig:F3_edge}. Thus for this toric model we have:
\begin{itemize}
  \item  $X_\tor = F_3$
  \item $\pi_c$ is the composition of $3$ corner blowdowns
  \item $\pi_e$ is the composition of $2$ edge blowdowns.
\end{itemize}
We will denote this toric model by $\calT_0$.

\begin{figure}[H]
\centering
\includegraphics[width=1\textwidth]{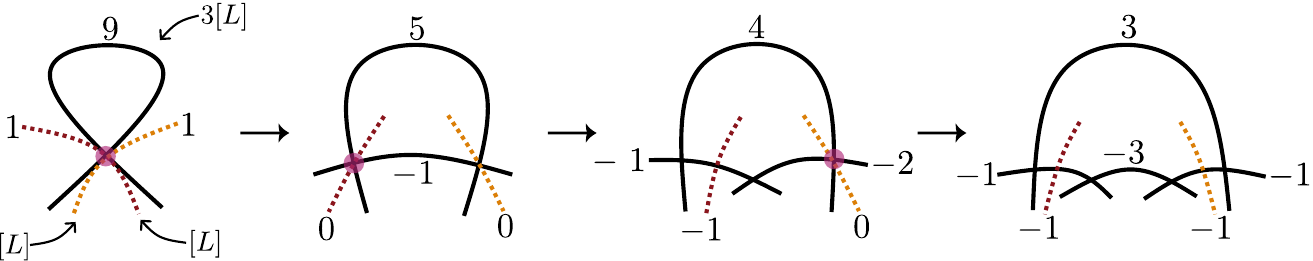}
\caption{The toric model $\calT_0$ for $(\CP^2,N_0)$, where $N_0$ is a nodal cubic. The divisor components are labeled by their self-intersection numbers, and the dashed lines denote the tangent lines to the two branches of $N_0$ at its double point, along with their subsequent strict transforms.}
\label{fig:toric_model_CP2}
\end{figure}

\begin{figure}[H]
\centering
\includegraphics[width=.4\textwidth]{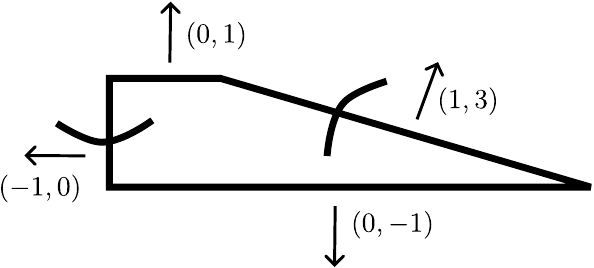}
\caption{The edge toric pair $(\wt{X},\wt{D})$ associated to the toric model $\calT_0$ for $(\CP^2,N_0$), where $\wt{X}$ is a two-point edge blowup of $F_3$.}
\label{fig:F3_edge}
\end{figure}
\end{example}

\begin{proposition}[{\cite[Prop. 1.3]{gross2015mirror}}]
Every Looijenga pair $(X,D)$ admits a toric model.
\end{proposition}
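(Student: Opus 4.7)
The plan has two stages: first perform corner blowups on $X$ to enlarge the boundary cycle, then contract non-boundary $(-1)$-curves on the resulting surface until reaching a toric pair. If $D$ is irreducible (the uninodal case), I begin by blowing up its unique node to obtain a Looijenga pair with boundary a cycle of two rational curves. More generally, any sequence of corner blowups of $X$ contributes to the map $\pi_c$ in the definition of a toric model, so I may replace $(X,D)$ freely by any iterated corner blowup $(\wt X,\wt D)$, which I will choose appropriately.

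The key observation justifying the edge contractions is the following: any $(-1)$-curve $E\subset \wt X$ with $E\not\subset \wt D$ is automatically an internal edge curve. By adjunction $E\cdot K_{\wt X}=-1$, and since $K_{\wt X}+\wt D\sim 0$ we have $E\cdot \wt D=1$; combined with nonnegativity of $E\cdot \wt D_j$ for each boundary component, this forces $E$ to meet $\wt D$ transversally at a single smooth point of a single component. Contracting such an $E$ is therefore an edge blowdown, producing a new Looijenga pair with Picard rank one less. Iterating, one arrives after finitely many steps at a Looijenga pair admitting no non-boundary $(-1)$-curves; the composition of these contractions will be the map $\pi_e$.

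The main obstacle is to arrange that this terminal pair is actually toric. A Looijenga pair with no non-boundary $(-1)$-curves need not be toric on the nose---for instance, $(\CP^2,N_0)$ from Example~\ref{ex:CP2_N0} is such a pair, since $\CP^2$ itself has no $(-1)$-curves at all---so the role of the initial corner blowups is essential. Each corner blowup at a node of $\wt D$ introduces a new exceptional boundary component and converts higher-order tangencies of other curves with the old boundary into transverse intersections, thereby extracting new non-boundary $(-1)$-curves. For example, the tangent lines to the two branches of $N_0$ at its node become non-boundary $(-1)$-curves only after several carefully placed corner blowups, as reflected in the toric model $\calT_0$ depicted in Example~\ref{ex:T0}. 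The core technical step is thus to show that for some finite sequence of corner blowups the surface $\wt X$ admits enough non-boundary $(-1)$-curves that edge-contracting them all lands at a toric pair; this is the content of \cite[Prop.~1.3]{gross2015mirror} and relies on a Mori-theoretic analysis of the pair $(\wt X,\wt D)$ together with the classification of smooth projective toric surfaces. Assembling the corner blowups of the first stage with the inverse of the sequence of edge contractions then yields the required diagram $(X_\tor,D_\tor)\leftarrow (\wt X,\wt D)\rightarrow (X,D)$.
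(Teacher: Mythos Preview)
The paper does not actually prove this proposition; it merely states it with a citation to \cite[Prop.~1.3]{gross2015mirror} and moves on. There is therefore no proof in the paper to compare your proposal against.

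As a standalone sketch, your outline is accurate and matches the strategy in the cited reference: one performs corner blowups and then runs a contraction procedure on non-boundary $(-1)$-curves, using the adjunction computation $E\cdot \wt D = -E\cdot K_{\wt X} = 1$ to see that each such contraction is an edge blowdown. You have also correctly identified the genuine content of the argument, namely that one must first choose the corner blowups so that the terminal output of the contraction process is toric. However, you do not supply this step yourself; you explicitly defer it to \cite{gross2015mirror}. So your proposal is best described as a correct summary of the proof architecture rather than a self-contained proof, which in the context of these lecture notes (where the result is simply quoted) is entirely appropriate.
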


\begin{remark}
  Each toric model gives a cluster toric chart on the interior $X \setminus D$ via the identification 
  \[
\begin{tikzcd}
(\C^*)^2 \arrow[r, phantom, "\cong"] &
X_\tor \setminus D_\tor \arrow[r, hook, "\pi_e^{-1}"'] &
\wt{X} \setminus \wt{D} \arrow[r, phantom, "\cong"] &
X \setminus D.
\end{tikzcd}
\]

\end{remark}

\subsection{Elementary transformations}\label{subsec:elementary_transformations}

We begin with a simple example to motivate elementary transformations.
Put $F_0 := \CP^1 \times \CP^1$, and let $\pi: F_0 \ra \CP^1$ be the projection onto the first factor.
Pick a point $\pp \in F_0$, and let $F = \pi^{-1}(\pi(\pp))$ be the fiber of $\pi$ through $\pp$.
Now blow up $F_0$ at $\pp$, and then blow down the resulting strict transform $\wt{F}$ of $F$ (note that this is a $(-1)$-curve).
The result is the first Hirzebruch surface $F_1$. Note that $F_0$ and $F_1$ are both toric, and the fibration $\pi$ induces the usual fibration $F_1 \ra \CP^1$ as a ruled surface. 
See Figure~\ref{fig:F0_elem_trans_F1}.

\begin{figure}[h]
\centering
\includegraphics[width=1\textwidth]{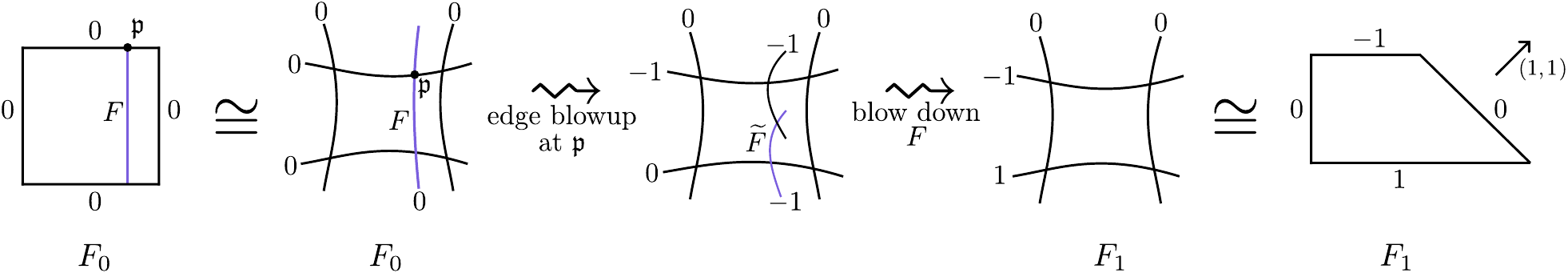}
\caption{Passing from $F_0$ to $F_1$ by an elementary transformation. Here the numbers denote self-intersection numbers and we freely switch between toric moment polygon representations and cartoon pictures depicting the relevant curves.}
\label{fig:F0_elem_trans_F1}
\end{figure}

More generally, let $X_\tor$ be a toric surface with primitive fan ray generators $v_1,\dots,v_k \in \Z^2$ (i.e. in the moment polygon picture these are the primitive inward normal vectors to the edges), with corresponding toric divisors $D_1,\dots,D_k \in X_\tor$.
Assume that we have $v_i = -v_j$ for some $1 \leq i < j \leq n$.
In this situation, toric algebraic geometry guarantees a projection map $\pi: X_\tor \ra \CP^1$ (this is an example of a toric morphism -- see e.g. \cite[\S3.3]{cox2011toric}).
Pick an edge point $\pp \in D_i$, and let $F := \pi^{-1}(\pi(\pp))$ be the fiber through $\pp$.
Let $\wt{X}_\tor$ denote the edge blowup of $X_\tor$ at $\pp$, and let $\wt{F} \subset \wt{X}_\tor$ be the strict transform of $F$. 
Let $X_\tor'$ denote the blowdown of $\wt{X}_\tor$ along $\wt{F}$.
\begin{lemma}
In this situation, $X_\tor'$ is again a toric surface, with primitive ray generators $v_1',\dots,v_k'$, where $v_s' := v_s + \max(v_s \wedge v_i,0)v_i$ for $s = 1,\dots,k$.
\end{lemma}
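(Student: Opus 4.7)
The plan is to verify both that $X_\tor'$ remains toric and that its new rays are given by the shear formula, by working locally near the fiber $F$ and exploiting the preserved ruled surface structure $\pi : X_\tor \to \CP^1$.

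First, I would set up the toric geometry. The map $\pi$ is the toric morphism induced by the lattice quotient $N \cong \Z^2 \to N/\Z v_i \cong \Z$; under this, both $v_i$ and $v_j = -v_i$ map to zero, so $D_i$ and $D_j$ are sections of $\pi$ while the remaining toric divisors are contained in fibers. The point $\pp \in D_i$ is fixed by the one-parameter subgroup $\exp(\R v_i) \subset T$, so this subgroup lifts through the blowup at $\pp$ and descends through the blowdown of $\wt F$. The complementary $\C^*$-action is transported equivariantly across the transformation away from $F$ and extends across the new exceptional configuration by a Hartogs-type removal of singularities, yielding a $T$-action on $X_\tor'$ with an open dense orbit. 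Hence $X_\tor'$ is toric.

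Next, to compute the new fan, I would argue that $X_\tor'$ still admits a toric projection to $\CP^1$ with $D_i, D_j$ as sections, so the rays $v_i$ and $v_j$ persist unchanged. For $s \notin \{i, j\}$, the strict transform $D_s'$ determines a primitive ray $v_s'$; since $X_\tor$ and $X_\tor'$ agree away from a neighborhood of $F \cup \wt F$, the image of $v_s'$ in $N/\Z v_i$ equals the image of $v_s$, so $v_s' = v_s + c_s v_i$ for some $c_s \in \Z$. The coefficient $c_s$ is pinned down by the Delzant smoothness condition on pairs of consecutive rays in the new fan, together with self-intersection bookkeeping: the blowup at $\pp \in D_i$ decreases $D_i^2$ by one, the subsequent blowdown of $\wt F$ increases $D_j^2$ by one (since $\wt F$ meets $D_j$ transversally at a single point), and both changes must be reproduced by the combinatorial formula $D_{v_s}^2 = -a_s$ where $v_{s-1} + v_{s+1} = a_s v_s$.

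Carrying this out yields $c_s = \max(v_s \wedge v_i, 0)$. The asymmetry reflects the choice of $\pp$ on $D_i$ rather than on $D_j$: only the rays on one side of the line $\R v_i$ are sheared. As a sanity check, on the Hirzebruch surface $F_n$ with $v_i = (0,1)$ the formula sends $(1,0), (0,1), (-1,n), (0,-1)$ to $(1,1), (0,1), (-1,n), (0,-1)$, which after the basis change $e_1' = (1,1),\, e_2' = e_2$ becomes $(1,0), (0,1), (-1, n+1), (0,-1)$---the fan of $F_{n+1}$, matching the classical elementary transformation on Hirzebruch surfaces. The main obstacle in the general case is making the Delzant-condition bookkeeping rigorous for arbitrary $X_\tor$; this can be handled either by induction on the number of rays (contracting toric $(-1)$-curves disjoint from $F$ to reduce to the Hirzebruch case) or by a direct Cox-ring computation tracking sections through the blowup and blowdown.
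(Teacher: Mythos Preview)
The paper states this lemma without proof; these are informal lecture notes and the result is treated as a standard fact from toric geometry. So there is no paper argument to compare against, and I will just assess your proposal on its own merits.

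Your argument has a genuine gap in the step establishing toricness of $X_\tor'$. The complementary $\C^*$-factor acts nontrivially on the base $\CP^1$ and hence moves the fiber $F$; it therefore does \emph{not} preserve $X_\tor \setminus F$, and you cannot conjugate it across the birational isomorphism $X_\tor \setminus F \cong X_\tor' \setminus E'$ and then appeal to Hartogs. Indeed the original $(\C^*)^2$-action on $X_\tor$ does not extend to $X_\tor'$: the torus structure on $X_\tor'$ arises from a \emph{different} torus embedding, one in which the new fiber $E'$ becomes a toric boundary component while one of the old special fibers loses that status. So the ``transport and extend'' step fails as written.

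Your proposed fallback---inductively contracting toric $(-1)$-curves disjoint from $F$ to reduce to the Hirzebruch case---is the correct route and handles both toricness and the fan formula at once. The point is that every $D_s$ with $s\neq i,j$ lies in a special fiber of $\pi$ (its ray projects nontrivially to $N/\Z v_i$), and whenever $k\geq 5$ at least one such $D_s$ is a $(-1)$-curve; contracting it commutes with the elementary transformation since their supports lie in distinct fibers, and it simply deletes the ray $v_s$ from both the source and target fans. Your Hirzebruch base case and the half-shear verification there are correct. I would lead with this inductive argument rather than presenting it as a backup.
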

Here $v \wedge w$ denotes the determinant of the matrix with columns $v,w$, and the map $\R^2 \ra \R^2$, $w \mapsto w + \max(w\wedge v,0)v$ is called the \hl{primitive half-shear along $v$}.
\begin{example}
The primitive half-shear along $v = (1,0)$ is the map $\R^2 \ra \R^2$ given by
\begin{align*}
(x,y) = \begin{cases}
(x,y) & y \geq 0 \\
(x - y,y) & y < 0.
\end{cases}
\end{align*}
See Figure~\ref{fig:PL_shear_image} for an illustration.
\begin{figure}[h]
\centering
\includegraphics[width=.75\textwidth]{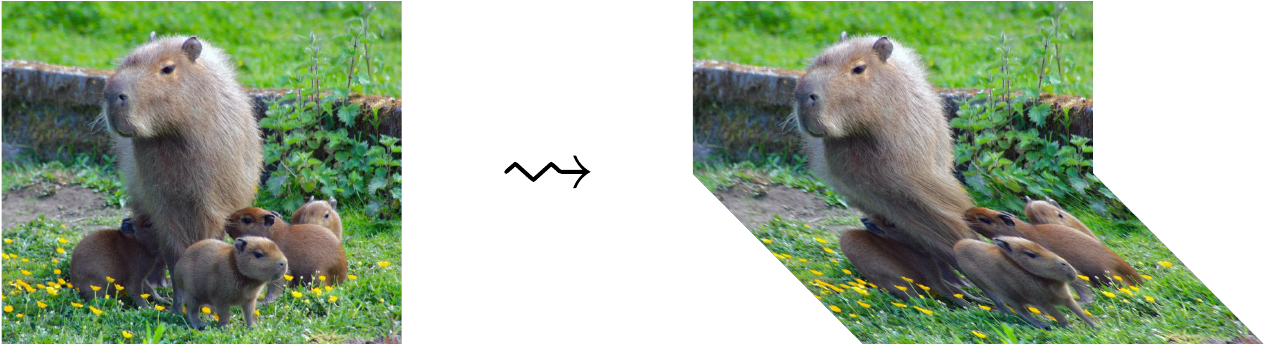}
\caption{The effect of the primitive half-shear along $v = (1,0)$.}
\label{fig:PL_shear_image}
\end{figure}
\end{example}
\begin{terminology}
The above procedure taking $X_\tor$ to $X_\tor'$ is called an \hl{elementary transformation}.
\end{terminology}

Now let $\calT$ be a toric model
\begin{tikzcd}
(X_\tor,D_\tor) & (\wt{X},\wt{D}) \arrow[l, "\pi_e"'] \arrow[r, "\pi_c"] & (X,D),
\end{tikzcd}
let $\pp_1,\dots,\pp_\ell \in D_\tor$ be the edge blowup points, and let $\mm_1,\dots,\mm_{\ell} \in \Z^2$ be the corresponding primitive fan ray generators.
Note that there may be repetitions in the list $\mm_1,\dots,\mm_{\ell}$, and not every primitive fan ray generator for $X_\tor$ is necessarily one of $\mm_1,\dots,\mm_{\ell}$ (that is, not every component of $D_\tor$ necessarily contains an edge blowup point).
Fix $1 \leq i \leq \ell$, and assume that $-\mm_i$ is a primitive fan ray generator for $X_\tor$.
Performing the elementary transformation of $X_\tor$ at $\pp_i$ gives a new toric model $\calT' = \mut_i\calT$ of the form
\[\begin{tikzcd}
(X_\tor',D_\tor') & (\wt{X},\wt{D}) \arrow[l, "\pi_e"'] \arrow[r, "\pi_c"] & (X,D)
\end{tikzcd}
\]
with associated data $\mm_1',\dots,\mm_\ell'$, where
\begin{align*}
\mm_s' = \begin{cases}
  -\mm_i & s = i\\
  m_s + \max(m_s \wedge m_i,0)m_i & s \neq i.
\end{cases}
\end{align*}

\begin{remark}
Recall that the interior $X \setminus D$ of a Looijenga pair $(X,D)$ does not change under corner blowups. For this reason, we will tend to view corner blowups as rather innocuous. In particular, for any $\mm_i$, we can always arrange that $-\mm_i$ appears as a primitive fan ray generator of $X_\tor$ after performing extra corner blowups, which we will often do implicitly. 
The most important data of a toric model is the vectors $\mm_1,\dots,\mm_\ell \in \Z^2$ which encode the edge blowups.
\end{remark}

Given a Looijenga pair $(X,D)$, we can consider its \hl{mutation graph}, whose vertices are toric models for $(X,D)$ and whose edges represent elementary transformations between toric models.
Pleasantly, it turns out that this graph is always connected.
\begin{proposition}[{Hacking--Keating \cite[Prop. 3.27]{hacking_keating_2022}}]
Given a Looijenga pair $(X,D)$, any two toric models are related by a sequence of corner blowups and elementary transformations.
\end{proposition}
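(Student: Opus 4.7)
The strategy is to first use corner blowups to bring the two toric models over a common intermediate surface $(\wt{X},\wt{D})$, and then to use elementary transformations to interpolate between the two resulting edge-blowdown structures.

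\emph{Step 1: Common corner refinement.} Let
\begin{align*}
\calT_i:\;(X_{\tor,i},D_{\tor,i}) \xleftarrow{\pi_{e,i}} (\wt{X}_i,\wt{D}_i) \xrightarrow{\pi_{c,i}} (X,D),\quad i=1,2,
\end{align*}
be the two given toric models. Each $\wt{X}_i$ is obtained from $X$ by a sequence of corner blowups at nodes of $D$, and $X_{\tor,i}$ sits below $\wt{X}_i$ via edge blowdowns. First I would perform additional corner blowups on each $\wt{X}_i$ (and correspondingly on $X_{\tor,i}$) until both are refined to a common $(\wt{X},\wt{D})$ that corner-dominates both $(\wt{X}_1,\wt{D}_1)$ and $(\wt{X}_2,\wt{D}_2)$. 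Since corner blowups preserve the interior and commute with edge blowdowns in the evident sense, each $\calT_i$ extends to a toric model $\calT_i'$ sharing the same intermediate surface $(\wt{X},\wt{D})$, with edge-blowdowns $\pi_{e,i}':(\wt{X},\wt{D}) \to (X_{\tor,i}',D_{\tor,i}')$.

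\emph{Step 2: Reduction to a combinatorial problem.} With both toric models sharing $(\wt{X},\wt{D})$, the difference is encoded entirely in the choice of which $(-1)$-curves in $\wt{D}$ are contracted as edge blowdowns, equivalently in the two tuples of primitive fan ray generators $(\mm_1^{(i)},\ldots,\mm_{\ell_i}^{(i)}) \in (\Z^2)^{\ell_i}$ recording the edge blowup points for $i=1,2$. Recall that an elementary transformation at the $j$-th point sends $\mm_j \mapsto -\mm_j$ and applies a primitive half-shear along $\mm_j$ to the remaining vectors. The theorem therefore reduces to showing that any two admissible tuples yielding the same Looijenga pair $(X,D)$ lie in a single orbit under elementary transformations together with corner blowups and blowdowns.

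\emph{Step 3: Connectivity via mutations.} I would argue by induction on a complexity measure, for instance the number of edge blowup points at which the two configurations disagree. The key preparatory observation is that for any chosen $\mm_j$, one can always arrange for $-\mm_j$ to be a primitive fan ray generator of the ambient toric surface by performing extra corner blowups (subdividing the moment polygon without touching the interior), which makes the corresponding elementary transformation available. A careful selection of mutations should then eliminate disagreements one at a time.

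\textbf{The hard part.} The principal obstacle is verifying termination of this inductive process: an elementary transformation, while resolving one disagreement, simultaneously modifies the remaining fan data through primitive half-shears and so may introduce fresh disagreements. Overcoming this requires identifying a monotone invariant (for example, the lattice area of the moment polygon of $X_\tor$, or a suitable norm on $(\mm_1,\ldots,\mm_\ell)$) that strictly decreases under some well-chosen mutation, or alternatively a more global argument exploiting the cluster-algebraic structure of the interior $X \setminus D$ in the sense of \cite{GHK_cluster}. Establishing this finiteness and the connectedness of the resulting mutation graph is the algebraically delicate heart of the argument carried out by Hacking--Keating.
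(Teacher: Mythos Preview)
The paper does not contain a proof of this proposition; it is simply quoted from Hacking--Keating \cite[Prop.~3.27]{hacking_keating_2022} and used as a black box. There is therefore no ``paper's own proof'' to compare your proposal against.

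As for the proposal itself: it is an outline rather than a proof, and you say so explicitly in your final paragraph. Steps 1 and 2 are fine as soft reductions --- passing to a common corner refinement and then recasting the problem in terms of the edge-blowup data $(\mm_1,\dots,\mm_\ell)$ is the natural opening move. But Step 3 is where all the actual content lies, and you have not supplied it. You correctly identify the difficulty (an elementary transformation applies a half-shear to \emph{all} the remaining vectors, so a naive ``fix one disagreement at a time'' induction need not terminate), and you gesture at possible invariants, but you do not verify that any of them actually works. In particular, the lattice area of the moment polygon is not obviously monotone under elementary transformations, and a ``suitable norm on $(\mm_1,\dots,\mm_\ell)$'' is doing a lot of unexamined work. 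The Hacking--Keating argument proceeds rather differently, via a reduction using the minimal model program for surfaces (contracting $(-1)$-curves in $\wt{D}$ or in the interior) rather than a direct combinatorial induction on the mutation data; your outline does not capture this.

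In short: your Steps 1--2 set up the problem reasonably, but your Step 3 is a placeholder, and the proof is not complete without it.
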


\begin{example}\label{ex:mut_graph_T0}
Here is part of the mutation graph for the Looijenga pair $(\CP^2,N_0)$ from Example~\ref{ex:CP2_N0}, starting with its toric model $\calT_0$ from Example~\ref{ex:T0}.
\begin{center}
\[
\begin{tikzcd}[row sep=0.8em, column sep=0.7em, font=\small]
  & & & \begin{array}{c} \calT_0 \\ \mm_1 = (-1,-3) \\ \mm_2 = (1,0) \end{array} \arrow[dl, squiggly] \arrow[dr, squiggly] & & & \\
  & & \begin{array}{c} \mut_2\calT_0 \\ \mm_1 = (2,-3) \\ \mm_2 = (-1,0) \end{array} \arrow[dl, squiggly] & & \begin{array}{c} \mut_1 \calT_0 \\ \mm_1 = (1,3) \\ \mm_2 = (1,0) \end{array} \arrow[dr, squiggly] & & \\
  & \begin{array}{c} \mut_1\mut_2\calT_0 \\ \mm_1 = (-2,3) \\ \mm_2 = (5,-9) \end{array} \arrow[dl, squiggly] & & & & \begin{array}{c} \mut_2\mut_1\calT_0 \\ \mm_1 = (1,3) \\ \mm_2 = (-1,0) \end{array} \arrow[dr, squiggly] & \\
  \cdots & & & & & & \cdots
\end{tikzcd}
\]
\end{center}
\end{example}
 \begin{exercise}\label{ex:symmetries_for_T0}
In Example~\ref{ex:mut_graph_T0}, show that there is a matrix in $\gl(2,\Z)$ taking the $\mm_1,\mm_2$ data of $\calT_0$ to that of $\mut_2\mut_1\calT_0$ or $\mut_1\mut_2\calT_0$, but not to that of $\mut_1\calT_0$ or $\mut_2\calT_0$.
\end{exercise}

\subsection{Well-placed curves}\label{subsec:well_placed_curves_def}

Now let $(X,D)$ be a uninodal Looijenga pair, and let $\calB_\pm$ denote the two local branches of $D$ near its double point $\pp$.

\begin{definition}\label{def:wp}
 A curve $C \subset X$ is \hl{$(p,q)$-well-placed} (with respect to $D$) if:
 \begin{itemize}
    \item $C \cap D = \{\pp\}$, intersecting in a single branch of $C$
    \item the local intersection multiplicities are $(C \cdot \calB_-)_\pp = p$ and $(C \cdot \calB_+)_\pp = q$.
  \end{itemize} 
\end{definition}

\begin{convention}
It will be convenient to slightly extend Definition~\ref{def:wp} by saying that $C$ is $(p,0)$-well-placed or $(0,p)$-well-placed if it intersects $D$ in a single smooth point of $D$, with multiplicity $p$.
\end{convention}
\begin{remark}
Strictly speaking we should write $(D;\calB_-,\calB_+)$ instead of $D$ in Definition~\ref{def:wp} to emphasize the labeling of the two branches, but this will often be immaterial and thus suppressed from the notation.
\end{remark}

A simple but key observation is that, for $p,q$ coprime, a well-placed curve $C$ automatically has a $(p,q)$ cusp at the double point $\pp$ of $D$.
Moreover, we have $c_1(C) = C \cdot D = p +q$, and hence the index of zero is zero.
See Figure~\ref{fig:well_placed} for an illustration.

\begin{figure}[H]
\centering
\includegraphics[width=0.23\textwidth]{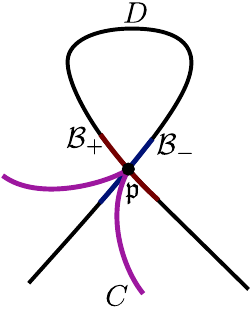}
\caption{A $(p,q)$-well-placed curve with respect to a uninodal anticanonical divisor $D$.}
\label{fig:well_placed}
\end{figure}

Our approach to Theorem~\ref{thmlet:C} will be to construct rational algebraic curves in $\CP^2$ which are $(p,q)$-well-placed with respect to the fixed nodal cubic $N_0$. By the above observation, these will in particular be $(p,q)$-sesquicuspidal of indx zero, but a priori this makes the search more difficult. Indeed, there are typically fewer well-placed curves then sesquicuspidal curves, as the following example illustrates.
\begin{example}
 One can show that the number of $(8,1)$-well-placed rational curves in $\CP^2$ with respect to the nodal cubic $N_0$ is $3$ (see \cite[\S7.1]{mcduff2024singular}), whereas according to Example~\ref{ex:multidir1} we have $\#_{\CP^2,3\ell}^{(8,1)} = 4$.
The discrepancy roughly comes from the fact that an almost complex structure preserving $N_0$ is not generic in the sense of Theorem~\ref{thmlet:E}(i), and heuristically the ``missing curve'' is $N_0$ itself.
\end{example}

\begin{propletbox}[``fundamental bijection'']\label{proplet:fund_bij}
 Let $(X,D)$ be a uninodal Looijenga pair with $\calT$ a toric model $\begin{tikzcd}
(X_\tor,D_\tor) & (\wt{X},\wt{D}) \arrow[l, "\pi_e"'] \arrow[r, "\pi_c"] & (X,D).
\end{tikzcd}$
There is a piecewise linear map $W_\calT: \Z^2_{\geq 0} \ra \Z^2$ and, for each $p,q \in \Z_{\geq 1}$, a bijection betweeen:
\begin{itemize}
  \item rational algebraic curves in $X$ which are $(p,q)$-well-placed with respect to $D$
  \item rational algebraic curves in $\wt{X}$ which intersect $\wt{D}$ in a single point, on the toric divisor component with primitive fan ray generator $W(p,q)$, with intersection multiplicity $\gcd(p,q)$.
\end{itemize}
\end{propletbox}
Here $W$ itself descends to a bijection from $\Z_{\geq 0}^2 / \sim$ to $\Z^2$ after identifying $(p,0) \sim (0,p)$ for all $p \in \Z_{\geq 1}$.
See Figure~\ref{fig:fund_bij} for an illustration.

\begin{figure}[H]
\centering
\includegraphics[width=1\textwidth]{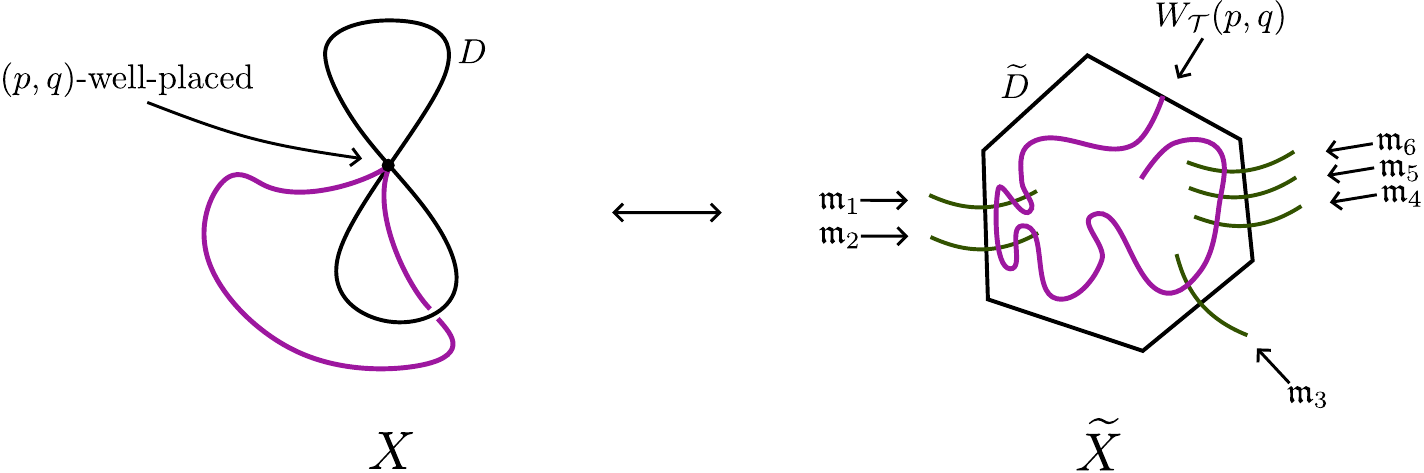}
\caption{The fundamental bijection relates well-placed curves in the uninodal Looijenga pair $(X,D)$ to curves in the edge toric pair $(\wt{X},\wt{D})$ which intersect the anticanonical divisor in one point.}
\label{fig:fund_bij}
\end{figure}

\begin{remark}
The curves in the second bullet of Proposition~\ref{proplet:fund_bij} could alternatively be thought of as complex affine lines in the interior of $(\wt{X},\wt{D})$, i.e. maps $\C \ra \wt{X} \setminus \wt{D}$, with specified asymptotic behavior.  
\end{remark}

\sss In the situation of Proposition~\ref{proplet:fund_bij}, let $\pp_1,\dots,\pp_\ell \in D_\tor$ be the edge blowup points, let $\mm_1,\dots,\mm_\ell \in \Z^2$ be the corresponding primitive fan ray generators, and let $E_1,\dots,E_\ell \subset \wt{X}$ be the corresponding exceptional curves (i.e. $E_s := \pi_e^{-1}(\pp_s)$ for $s = 1,\dots,\ell$).
Then, under the fundamental bijection, these correspond to curves $F_1,\dots,F_\ell \subset X$, where $F_s$ is $W_\calT^{-1}(\mm_s)$-well-placed for $s = 1,\dots,\ell$.
In particular, putting $u_s := W_\calT^{-1}(\mm_s)$ as a shorthand, this means that $F_s$ is a $u_s$-unicuspidal rational curve of index zero.

\begin{upshot}\label{upshot:curves}
 By iteratively mutating $\calT$ and looking at the corresponding curves $F_1,\dots,F_\ell$, we get lots of index zero rational unicuspidal curves in $X$.
\end{upshot}

\begin{example}\label{ex:T0_with_u1_u2}
  Revisiting Example~\ref{ex:mut_graph_T0} from the point of view of Upshot~\ref{upshot:curves}, the data $u_1,u_2$ for the corresponding curves is as follows:
\begin{center}
\[
\begin{tikzcd}[row sep=0.8em, column sep=0.7em, font=\small]
  & & & \begin{array}{c} \calT_0 \\ u_1 = (2,1) \\ u_2 = (1,2) \end{array} \arrow[dl, squiggly] \arrow[dr, squiggly] & & & &\\
  & & \cdots & & \begin{array}{c} \mut_1 \calT_0 \\ u_1 = (1,5) \\ u_2 = (1,2) \end{array} \arrow[dr, squiggly] & & &\\
  & & & & & \begin{array}{c} \mut_2\mut_1\calT_0 \\ u_1 = (1,5) \\ u_2 = (2,13) \end{array} \arrow[dr, squiggly] \arrow[dr, squiggly] & &\\
  & & & & & & \begin{array}{c} \mut_1\mut_2\mut_1\calT_0 \\ u_1 = (5,34) \\ u_2 = (2,13) \end{array} \arrow[dr,squiggly] &\\
  &&&&&&& \cdots
\end{tikzcd}
\]
\end{center}
Comparing these numerics with Theorem~\ref{thm:bob_et_al}(d), we see that these give all of the outer corner curves in $\CP^2$.
\end{example}

\begin{example}\label{ex:W_for_T0}
 For our main uninodal Looijenga pair $(\CP^2,N_0)$ with its toric model $\calT_0$ from Example~\ref{ex:T0},
 the piecewise linear map $W_{\calT_0}: \Z_{\geq 0}^2 / \sim \rightarrow \Z^2$ appearing in the fundamental bijection is given\footnote{As a small word of caution, our sign convention here for $W_{\calT_0}$ is the opposite of the one appearing in \cite{sesqui} (c.f. \cite[Table 4.3.1]{sesqui}.)} by:
\begin{align*}
  W_{\calT_0}(p,q) = 
  \begin{cases}
    (-q,p-5q) & p/q \geq 2 \\
    (q-p,q-2p) & 1/2 \leq p/q \leq 2 \\
    (p,q-2p) & p/q \leq 1/2.
  \end{cases}
\end{align*}
Recall that we have $\mm_1 = (-1,-3), \mm_2 = (1,0)$ and $u_1 = (2,1), u_2 = (1,2)$.
See Figure~\ref{fig:fund_bij_W} for an illustration.
\end{example}

\begin{figure}[H]
\centering
\includegraphics[width=0.9\textwidth]{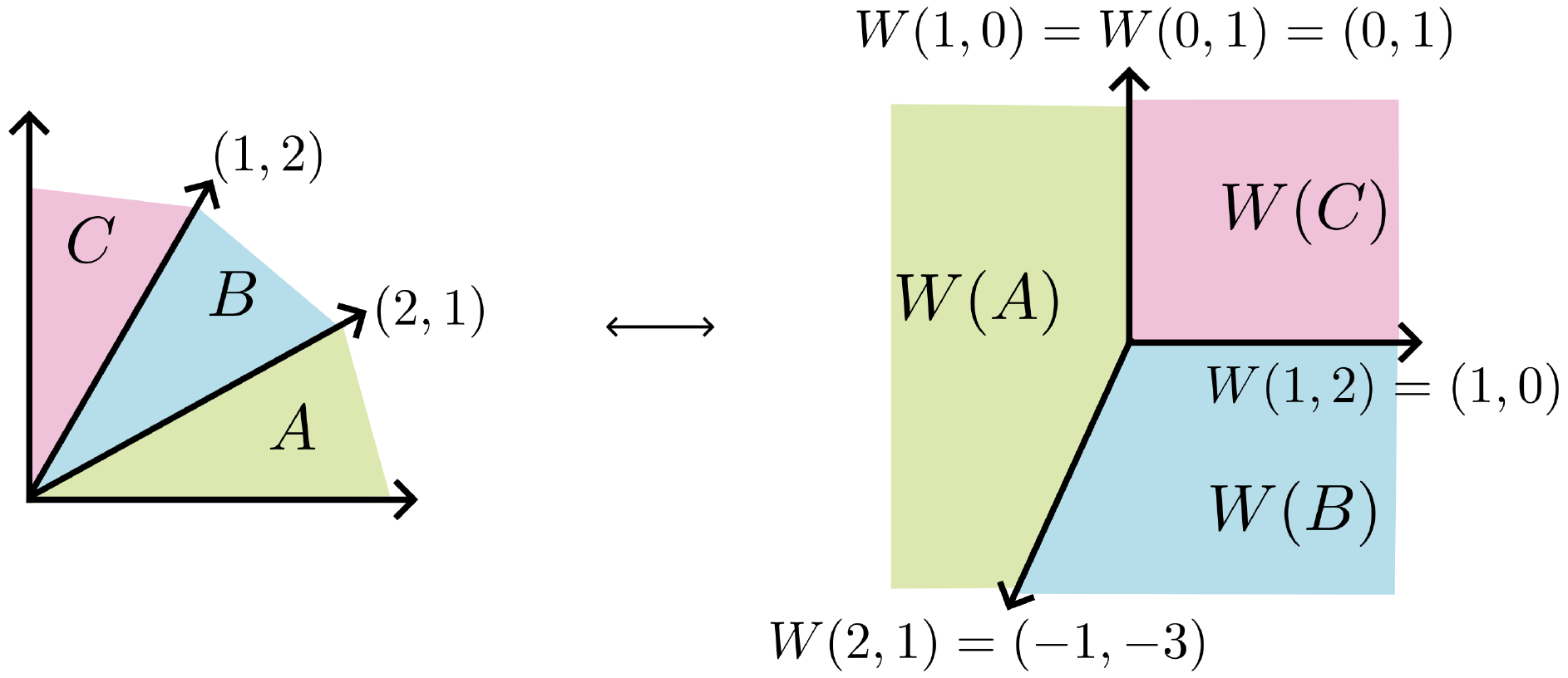}
\caption{The piecewise linear map $W_{\calT_0}: \Z^2_{\geq 0} \ra \Z^2$ appearing in the fundamental bijection associated to the toric model $\calT_0$ for $(\CP^2,N_0)$.}
\label{fig:fund_bij_W}
\end{figure}

\subsection{The shift and reflection symmetries}\label{subsec:shift_reflection_symmetries}

We now discuss symmetries for well-placed curves, focusing especially on our main example $(\CP^2,N_0)$ as in Example~\ref{ex:CP2_N0}.
Our treatment here is based on elementary transformations as in \S\ref{subsec:elementary_transformations} and is closely related to the ``generalized Orevkov twist'' from \cite{mcduff2024singular}, which is based on the construction in \cite{orevkov2002rational} (see also \cite{Kol} for an alternative perspective using Geiser-type involutions and \cite[\S3]{sesqui} for a more detailed summary). 
While our discussion here is somewhat abstract compared with the ones appearing in the preceding references, the one here embeds the symmetries into a more general context related to cluster transformations.

Suppose that a Looijenga pair $(X,D)$ has two toric models
\[
\calT: 
\begin{tikzcd}
(X_\tor,D_\tor) & (\wt{X},\wt{D}) \arrow[l, "\pi_e"'] \arrow[r, "\pi_c"] & (X,D)
\end{tikzcd}
\]
with data $\mm_1,\dots, \mm_\ell, u_1,\dots,u_\ell$,
and 
\[
\calT': 
\begin{tikzcd}
(X_\tor',D_\tor') & (\wt{X}',\wt{D}') \arrow[l, "\pi_e'"'] \arrow[r, "\pi_c'"] & (X,D)
\end{tikzcd}
\]
with data $\mm_1',\dots,\mm_\ell',u_1',\dots,u_\ell'$, such that there is a transformation $A \in \gl(2,\Z)$ satisfying 
$A \cdot \mm_s = \mm_s'$ for $s = 1,\dots,\ell$.
Then, up to corner blowups, we have isomorphisms
\[
\begin{tikzcd}
(X_\tor,D_\tor) \arrow[d, "\cong"'] 
  & (\wt{X},\wt{D}) \arrow[l, "\pi_e"'] \arrow[r, "\pi_c"] \arrow[d, "\cong"] 
  & (X,D) \\
(X_\tor',D_\tor') 
  & (\wt{X}',\wt{D}') \arrow[l, "\pi_e'"'] \arrow[r, "\pi_c'"] 
  & (X,D).
\arrow[from=1-1, to=2-2, phantom, "\circlearrowleft" description, pos=0.5]
\end{tikzcd}
\]
Recall that restricting to interiors gives isomorphisms:
$\pi_c: \begin{tikzcd}
\wt{X} \setminus \wt{D} \arrow[r, "\cong"] & X \setminus D
\end{tikzcd}$
and
$\pi_c': \begin{tikzcd}
\wt{X}' \setminus \wt{D}' \arrow[r, "\cong"] & X \setminus D.
\end{tikzcd}$
Using these together with the isomorphism $(\wt{X},\wt{D}) \cong (\wt{X}',\wt{D}')$, we get an isomorphism 
$\begin{tikzcd}
 X \setminus D \arrow[r, "\cong"] & X \setminus D
\end{tikzcd}$ which sends a $u_s$-well-placed curve to a $u_s'$-well-placed curve for $s = 1,\dots,\ell$.

In the case of $(\CP^2,N_0)$ with its toric model $\calT_0$, recall from Exercise~\ref{ex:symmetries_for_T0} that we have isomorphisms as above between $\calT_0$ and $\mut_2\mut_1\calT_0$, and also between $\calT_0$ and $\mut_1\mut_2\calT_0$.
According to the preceding discussion, these induces isomorphisms
\[
\Phi,\Psi: 
\begin{tikzcd}
\CP^2 \setminus N_0 \arrow[r, "\cong"] & \CP^2 \setminus N_0.  
\end{tikzcd}
\]

\begin{lemma}
 If $C \subset \CP^2$ is $(p,q)$-well-placed with respect to $N_0$, then $\Phi_*(C) \subset \CP^2$ is $(p',q')$-well-placed with respect to $N_0$, where
 \begin{align*}
 (p',q') = 
 \begin{cases}
(p,7p-q) & p/q \geq 1/7\\
(q-7p,p+7(q-7p)) & p/q < 1/7.
 \end{cases}
 \end{align*}
 Similarly, $\Psi_*(C)$ is $(p'',q'')$-well-placed with respect to $N_0$, where
\begin{align*}
 (p'',q'') = 
 \begin{cases}
 (q + 7(p-7q),p-7q)  & p/q \geq 7\\
(7q-p,q)  & p/q < 7.
 \end{cases}
 \end{align*}
\end{lemma}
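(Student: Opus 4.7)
The plan is to apply the general framework set up immediately before the lemma, together with the piecewise linear formula for $W_{\calT_0}$ from Example~\ref{ex:W_for_T0}. Recall that $\Phi$ (respectively $\Psi$) is induced from a $\gl(2,\Z)$-identification of $\calT_0$ with $\mut_2\mut_1\calT_0$ (respectively $\mut_1\mut_2\calT_0$); call the matrix $A$ (respectively $A'$). By that mechanism, if $C$ is a $(p,q)$-well-placed curve, then the fundamental bijection (Proposition~\ref{proplet:fund_bij}) associates to it a boundary ray $W_{\calT_0}(p,q) \in \Z^2$ in the fan of $X_\tor$, and the image $\Phi_*(C)$ is the unique well-placed curve whose boundary ray in the fan of the mutated toric model is $A \cdot W_{\calT_0}(p,q)$.

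First I would pin down the matrices. Comparing the $\mm$-data $((-1,-3), (1,0))$ of $\calT_0$ with $((1,3), (-1,0))$ of $\mut_2\mut_1\calT_0$ from Example~\ref{ex:mut_graph_T0}, the identification underlying $\Phi$ is realized by $A = \begin{pmatrix} 1 & 0 \\ 3 & -1 \end{pmatrix}$, which sends $\mm_1 \mapsto \mm'_2$ and $\mm_2 \mapsto \mm'_1$ (equivalently, matches the toric models after relabeling the two blowup points); its determinant $-1$ is consistent with $\Phi$ being the \emph{reflection} of the section's title. Similarly, the identification underlying $\Psi$ is realized by $A' = \begin{pmatrix} 5 & -1 \\ -9 & 2 \end{pmatrix}$ with determinant $1$, which sends $\mm_s \mapsto \mm'_s$ directly (no relabeling) and plays the role of the \emph{shift}.

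With the matrices fixed, the remaining work is to invert the equation $W_{\text{target}}(p', q') = A \cdot W_{\calT_0}(p, q)$ in closed form. For $\Phi$, I would compute $W_{\mut_2\mut_1\calT_0}$ (either directly, as in Example~\ref{ex:W_for_T0}, or by propagating through the mutation rules), case-split on the three domains of $W_{\calT_0}$ (namely $p/q \geq 2$, $1/2 \leq p/q \leq 2$, and $p/q \leq 1/2$), apply $A$, and identify the domain of $W_{\mut_2\mut_1\calT_0}^{-1}$ in which the result lies. After simplification, several of the a priori nine subcases collapse, leaving the two-case formula of the lemma, with the breakpoint $p/q = 1/7$ marking the ray along which $A \cdot W_{\calT_0}(p,q)$ crosses between regions of $W_{\mut_2\mut_1\calT_0}^{-1}$'s domain. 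The argument for $\Psi$ is entirely parallel and produces the breakpoint $p/q = 7$.

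The main obstacle is the piecewise-linear bookkeeping in the composition $W_{\text{target}}^{-1} \circ A \circ W_{\calT_0}$, which requires carefully tracking which subcases are nonempty and how they merge. A natural sanity check at each stage is that the formula reproduces $u_s \mapsto u'_s$ (after the implicit relabeling, when applicable) and is continuous across the breakpoint modulo the identification $(p,0) \sim (0,p)$: for $\Phi$ the formula indeed gives $(2,1) \mapsto (2,13)$ and $(1,2) \mapsto (1,5)$, matching $u'_2 = (2,13)$ and $u'_1 = (1,5)$ of $\mut_2\mut_1\calT_0$ from Example~\ref{ex:T0_with_u1_u2}, and analogously for $\Psi$.
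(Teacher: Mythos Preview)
The paper does not actually supply a proof of this lemma; it is stated and then immediately used to deduce Corollary~\ref{cor:shift_and_reflection}. So there is nothing to compare against at the level of argument, and your proposal is a reasonable way to fill in what the paper leaves implicit: pin down the matrix $A \in \gl(2,\Z)$ realizing the identification of toric models, then unwind $(p',q') = W_{\calT'}^{-1}\bigl(A \cdot W_{\calT_0}(p,q)\bigr)$ using the explicit description of $W_{\calT_0}$ from Example~\ref{ex:W_for_T0} and the mutation rules. Your choice of $A = \begin{psmallmatrix} 1 & 0 \\ 3 & -1 \end{psmallmatrix}$ (with the relabeling $\mm_s \mapsto \mm'_{\sigma(s)}$) is indeed the one that matches the lemma's formula, as your sanity check on the seed curves confirms; note that the alternative $A = -I$ (without relabeling) would give a different automorphism sending $(2,1) \mapsto (1,5)$ rather than $(2,13)$, so your choice is forced by the statement.

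One small correction: you have the ``shift'' and ``reflection'' labels reversed. From the formula, $\Phi$ sends a $(p,q)$-well-placed curve with $p \geq q$ to a $(p,7p-q)$-well-placed curve, so the ratio $x = p/q$ goes to $(7p-q)/p = 7 - 1/x = S(x)$; thus $\Phi$ induces the \emph{shift}. Likewise $\Psi$ with $x \geq 7$ sends $x \mapsto 7 + 1/(x-7) = R(x)$, the \emph{reflection}. The determinant heuristic (det $-1 \Rightarrow$ reflection) is misleading here because $S$ and $R$ act on the quotient $\{p/q \geq 1\}$ rather than on $\Z^2$ itself, and the involution property of $R$ does not track $\det A'$. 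This does not affect the correctness of your argument, only the parenthetical commentary.

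The genuine work you leave open is computing $W_{\mut_2\mut_1\calT_0}$ and carrying out the piecewise-linear case analysis. This is doable (since $\pi_c$ is unchanged under mutation, one has $W_{\calT'} = H \circ W_{\calT_0}$ where $H$ is the composition of the two primitive half-shears, so $W_{\calT'}^{-1} \circ A = W_{\calT_0}^{-1} \circ H^{-1} \circ A$ can be written down explicitly), but it is the substance of the proof and deserves to be carried out rather than gestured at.
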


\begin{notation}\hfill
  \begin{itemize}
    \item Let $\calS_{\CP^2}$ denote the set of rational numbers $p/q \geq 1$ for which there exists an index zero rational algebraic $(p,q)$-sesquicuspidal curve in $\CP^2$.
    \item Let $\calS_{\CP^2}^{N_0}$ denote the set of rational numbers $p/q \geq 1$ for which there exists a a rational algebraic curve in $\CP^2$ which is $(p,q)$-well-placed with respect to $(N;\calB_-,\calB_+)$ or $(N;\calB_+,\calB_-)$.
  \end{itemize}
\end{notation}
\NI   Note that we have the inclusion $\calS_{\CP^2}^{N_0} \subset \calS_{\CP^2}$.

\begin{corollary}\label{cor:shift_and_reflection} \hfill
\begin{itemize}
  \item The set $\calS_{\CP^2}^{N_0}$ is invariant under the ``shift'' symmetry $S(x) := 7 - \frac{1}{x}$
  \item The set $\calS_{\CP^2}^{N_0} \cap (7,\infty)$ is invariant under the ``reflection'' symmetry $R(x) := 7 + \frac{1}{x-7}$.
\end{itemize}
\end{corollary}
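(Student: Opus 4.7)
The plan is to deduce both statements directly from the preceding lemma by tracking how the induced maps $\Phi_*$ and $\Psi_*$ act on the ratio $x = p/q$, using in an essential way that $\calS_{\CP^2}^{N_0}$ is, by construction, insensitive to the choice of branch ordering of $N_0$ at the node, i.e.\ unchanged by the swap $(p,q) \leftrightarrow (q,p)$.

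For the shift statement, I would start from a $(p,q)$-well-placed rational curve with $x = p/q \geq 1$. Since $x \geq 1 > 1/7$, the preceding lemma applies in its first regime, so $\Phi_*$ produces a $(p,\,7p-q)$-well-placed curve. Because $q \leq p$ gives $7p - q \geq 6p > p$, the larger entry is the second one, and the definition of $\calS_{\CP^2}^{N_0}$ (which allows either branch ordering) records the corresponding ratio as
\[
  \frac{7p-q}{p} \;=\; 7 - \frac{1}{x} \;=\; S(x).
\]
Hence $S(\calS_{\CP^2}^{N_0}) \subseteq \calS_{\CP^2}^{N_0}$, which is the desired invariance.

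For the reflection statement, I would start from a $(p,q)$-well-placed curve with $x = p/q > 7$. This lies in the first regime of the $\Psi_*$ formula, yielding a $(7p-48q,\,p-7q)$-well-placed curve. The inequalities $7p - 48q > p - 7q > 0$ (equivalent to $6p > 41q$ and $p > 7q$) both hold once $x > 7$, so no swap of entries is required, and a direct calculation gives
\[
  \frac{7p - 48q}{p - 7q} \;=\; \frac{7(x-7) + 1}{x - 7} \;=\; 7 + \frac{1}{x-7} \;=\; R(x).
\]
Since $R$ sends $(7,\infty)$ to itself (indeed involutively), this yields $R\bigl(\calS_{\CP^2}^{N_0} \cap (7,\infty)\bigr) \subseteq \calS_{\CP^2}^{N_0} \cap (7,\infty)$.

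I do not foresee any serious obstacle beyond this bookkeeping: the entire content of the corollary lies in the existence of $\Phi, \Psi$ and their explicit action on well-placed data, both of which are already supplied by the preceding lemma. The one point requiring care is the branch-swap convention, which must be tracked consistently when reinterpreting the output of the lemma as a ratio $\geq 1$ belonging to $\calS_{\CP^2}^{N_0}$.
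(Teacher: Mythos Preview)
Your approach is exactly the one the paper has in mind (the corollary is stated without proof, as an immediate consequence of the lemma on $\Phi_*$ and $\Psi_*$), and your computations are correct.

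There is one small gap worth flagging. When you write ``start from a $(p,q)$-well-placed curve with $x=p/q\geq 1$'', you are implicitly assuming that the larger intersection number occurs along $\calB_-$. But the definition of $\calS_{\CP^2}^{N_0}$ allows either ordering, so a priori you might only have a $(q,p)$-well-placed curve (with respect to the fixed labeling used in the lemma), and applying $\Phi_*$ to that does \emph{not} produce the ratio $S(x)$. This is easily patched in either of two ways: (i) observe that the specific cubic $N_0=\{X^3+Y^3+XYZ=0\}$ admits the automorphism $[X{:}Y{:}Z]\mapsto[Y{:}X{:}Z]$ swapping the two branches, so one may always reduce to the case $(C\cdot\calB_-)_\pp \geq (C\cdot\calB_+)_\pp$; or (ii) note that $\Phi_*$ and $\Psi_*$ are conjugate under the branch swap (as is visible from the formulas), so in the other case one applies $\Psi_*$ instead of $\Phi_*$ and lands on the same ratio. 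Your closing remark about ``the branch-swap convention'' suggests you are aware of this, but the reduction should be made explicit.

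A second minor point: you establish $S(\calS_{\CP^2}^{N_0})\subseteq\calS_{\CP^2}^{N_0}$, which is the natural reading of ``invariant'' for a non-bijective map. If the two-sided statement $x\in\calS_{\CP^2}^{N_0}\Leftrightarrow S(x)\in\calS_{\CP^2}^{N_0}$ is wanted, the converse follows by the same bookkeeping using $\Psi_*$ in its second regime (applied to $(7p-q,p)$ it returns $(q,p)$). For $R$ this is automatic since $R$ is an involution on $(7,\infty)$, as you note.
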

The terminology in Corollary~\ref{subsec:shift_reflection_symmetries} comes from the following.
\begin{exercise} The shift symmetry gives the disjoint union decomposition $(\tau^4,\infty) = \bigsqcup\limits_{i=0}^\infty S^i([7,\infty))$, and $R$ is an involution swapping $(7,8)$ and $(8,\infty)$. 
\end{exercise}

\begin{remark}
In light of Example~\ref{ex:T0_with_u1_u2}, it follows that the outer corner curves in $\CP^2$ can be constructed by repeatedly applying $\Phi$ to just two inital ``seed curves'', namely a $(2,1)$-well-placed line and a $(1,2)$-well-placed line.
\end{remark}

\begin{remark}
 If $C$ is a degree $d$ curve in $\CP^2$ which is $(3d,0)$-well-placed with respect to $N_0$ (e.g. $C$ is a flex line in the case $d=1$), then $\Phi_*(C)$ is $(21d,3d)$-well-placed but has a $(21d+1,3d)$ cusp. Note that the index of $\Phi_*(C)$ is $-2$.
 For $d=1,2$, this gives precisely the curves (e),(f) in Theorem~\ref{thm:bob_et_al},
 while for $d \geq 3$ we get curves which are sesquicuspidal but not unicuspidal.
\end{remark}

\section{Scattering diagrams and the tropical vertex}\label{sec:scattering_diagrams}

This lecture is essentially a crash course on scattering diagrams.
We first recall some basic definitions and the Kontsevich--Soibelman algorithm in \S\ref{subsec:scattering_basics}. 
Then, in \S\ref{subsec:toy_examples} we give three main examples of scattering diagrams and their Kontsevich--Soibelman completions, the last of which is really a theorem of Gross--Pandharipande and is central to our proof of Theorem~\ref{thmlet:C}.
Finally, in \S\ref{subsec:tropical_vertex} we give a (somewhat informal) statement of the tropical vertex theorem of Gross--Pandharipande--Siebert and connect it with the content of Lecture~\ref{sec:well_placed_curves}.

\subsection{Scattering diagram basics}\label{subsec:scattering_basics}

Put $A := \C[x,x^{-1},y,y^{-1}] \llbracket t \rrbracket$, and let $\aut(A)$ denote the group of automorphisms of $A$ as a $\C\llbracket t\rrbracket$-algebra. Given a lattice point $\mm = (a,b) \in \Z^2$, we use the shorthand $z^\mm$ for the corresponding monomial $x^ay^b$.

\begin{definition}
  A \hl{wall} in $\R^2$ is a pair $(\frakd,f)$, where
  \begin{itemize}
    \item $\frakd$ is an oriented ray in $\R^2$ with endpoint at the origin
    \item $f \in \C[z^\mm]\llbracket t \rrbracket \subset A$ satisfies $f \equiv 1$ (mod $z^\mm t$), where $\mm \in \Z^2$ is the primitive oriented generator of $T\frakd$.
  \end{itemize}
\end{definition}
\NI Here $T\frakd$ denotes the tangent space to $\frakd$, i.e. the line containing $\frakd$. Note that in particular the ray $\frakd$ must point in a rational direction. We will sometimes refer to $f$ as the \hl{label} of the wall.

\begin{example}
 Figure~\ref{fig:walls} illustrates two examples of walls. In both cases the underlying ray points in the diretion $(2,3)$. We will say that the wall is \hl{outgoing} (resp. \hl{incoming}) if the ray is oriented away from (resp. towards) the origin.
\end{example}

\begin{figure}[H]
\centering
\includegraphics[width=0.8\textwidth]{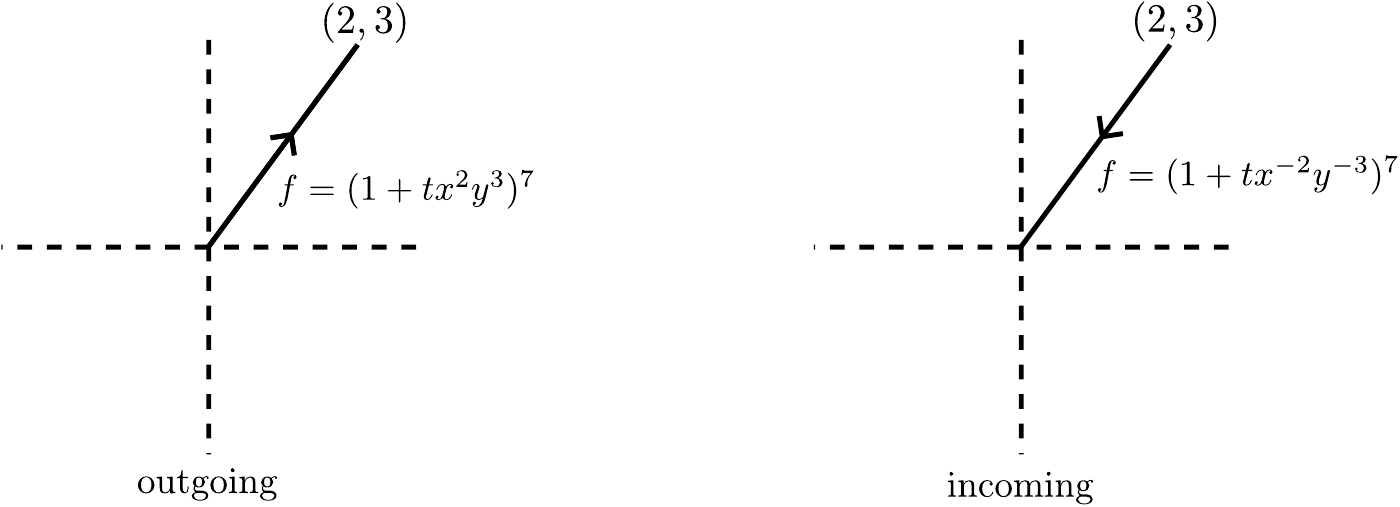}
\caption{Two examples of walls $(\frakd,f)$ in $\R^2$.}
\label{fig:walls}
\end{figure}

\begin{definition}\label{def:scat_diag}
  A \hl{scattering diagram} $\calD$ is a multiset of walls in $\R^2$ such that, for each $k \in \Z_{\geq 1}$, all but finitely many labels appearing in $\calD$ are congruent to $1$ modulo $t^k$.
\end{definition}
\NI In other words, we allow infinitely many walls, but most of the nontrivial terms will involve arbitrarily large powers of $t$.

\begin{remark}
 The definition of scattering diagrams given in Definition~\ref{def:scat_diag} is greatly simplified for our purposes. 
In the literature one can find more general definitions which allow rays not ending at the origin, labels coming from other algebras, walls in $\R^n$ for $n \geq 3$, and so on.
\end{remark}

\begin{definition}
  Given a wall $(\frakd,f)$, its \hl{wall-crossing monodromy} $\Theta_{(\frakd,f)} \in \aut(A)$ is defined on monomials by $z^\mm$, $\mm \in \Z^2$, by
  \[\Theta_{(\frakd,f)}(z^\mm) := f^{\langle \mm,\nn\rangle}z^\mm,\] where $\nn \in \Z^2$ is the oriented primitive normal to $\mm$.
\end{definition}

\begin{example}
For the first example in Figure~\ref{fig:walls}, we have $\nn = (-3,2)$ and $\Theta_{(\frakd,f)}(x^ay^b) = (1 + tx^2y^3)^{7(-3a+2b)}x^ay^b$ for all $(a,b) \in \Z^2$.
\end{example}

\begin{definition}\label{def:tot_mon}
  Given a scattering diagram $\calD$, its \hl{total monodromy} $\mon_\calD \in \aut(A)$ is given by 
  \begin{align*}
   \mon_\calD := \prod_{(\frakd,f) \in \calD} \Theta_{(\frakd,f)},
   \end{align*} 
   where the product is over all walls in $\calD$ in the counterclockwise direction, starting at the positive $x$-axis.
\end{definition}
\NI Note that infinite product in Definition~\ref{def:tot_mon} converges $t$-adically thanks to the finiteness condition in Definition~\ref{def:scat_diag}.

The following proposition is central to the theory of scattering diagrams.
\begin{proposition}[Kontsevich--Soibelman, generalized by Gross--Siebert]\label{prop:KSGS}
Given any scattering diagram $\calD$ containing only incoming walls, one can add outgoing walls (typically infinitely many) to obtain a new scattering diagram $\ks(\calD)$ such that $\mon_{\ks(D)} = \1$.
\end{proposition}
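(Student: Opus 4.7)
The plan is to construct $\ks(\calD)$ by induction on powers of $t$, at each stage adjoining finitely many outgoing walls whose wall-crossings cancel the residual monodromy one $t$-order at a time. The natural framework is the pro-nilpotent Lie algebra $\mathfrak{g}\subset\op{Der}_{\C\llbracket t\rrbracket}(A)$ of $\C\llbracket t\rrbracket$-linear derivations vanishing modulo $t$; the finiteness condition in Definition~\ref{def:scat_diag} ensures that every $\Theta_{(\frakd,f)}$ lies in $\exp(\mathfrak{g})$ and that the infinite product defining $\mon_\calD$ converges $t$-adically. A direct computation writes each wall-crossing as $\Theta_{(\frakd,f)}=\exp(\log(f)\cdot D_\mm)$, where $\mm$ is the primitive direction of $\frakd$ and $D_\mm$ is the derivation $z^{\mm'}\mapsto \langle\mm',\nn\rangle z^{\mm'}$. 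The subalgebra $\mathfrak{h}_\mm := z^\mm\C[z^\mm]\llbracket t\rrbracket\cdot D_\mm$ is abelian, since $D_\mm$ annihilates every $z^{j\mm}$, and a direct check shows $[\mathfrak{h}_{\mm_1},\mathfrak{h}_{\mm_2}]$ lands inside the subalgebra indexed by the primitive direction of $\mm_1+\mm_2$ whenever $\mm_1$ and $\mm_2$ are non-parallel.

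Inductively, suppose I have already enlarged $\calD$ to a scattering diagram $\calD_k\supset \calD$ with only outgoing walls added, satisfying $\mon_{\calD_k}\equiv\1\pmod{t^{k+1}}$; the base case $k=0$ is immediate because every initial label is $\equiv 1\pmod{t}$. Writing $\mon_{\calD_k}=\exp(\delta)$ with $\delta\in\mathfrak{g}$, the $t^{k+1}$-part $\delta_{k+1}$ is a \emph{finite} $\C$-linear combination of generators in $\bigoplus_{\mm'\text{ primitive}}\mathfrak{h}_{\mm'}$: finiteness is because only finitely many walls of $\calD_k$ contribute nontrivial factors modulo $t^{k+2}$ and only finitely many iterated brackets survive. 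Grouping by primitive direction,
\[
\delta_{k+1}=\sum_{\mm'\text{ primitive}}\,g_{\mm'}(z^{\mm'})\,t^{k+1}\,D_{\mm'},
\]
with $g_{\mm'}\in z^{\mm'}\C[z^{\mm'}]$. For each primitive $\mm'$ with $g_{\mm'}\ne 0$, I adjoin a single outgoing wall $(\frakd_{\mm'},\,1-g_{\mm'}(z^{\mm'})t^{k+1})$, whose wall-crossing equals $\exp(-g_{\mm'}(z^{\mm'})t^{k+1}D_{\mm'})$ modulo $t^{k+2}$; since the $t^{k+1}$-contributions from distinct primitive directions commute modulo $t^{k+2}$, this yields $\calD_{k+1}$ with $\mon_{\calD_{k+1}}\equiv\1\pmod{t^{k+2}}$.

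Taking $\ks(\calD):=\bigcup_k\calD_k$ produces a scattering diagram satisfying the finiteness axiom (only finitely many new walls are added at each $t$-order) with $\mon_{\ks(\calD)}=\1$ on the nose. The main obstacle, and the algebraic heart of the Kontsevich--Soibelman--Gross--Siebert argument, is verifying that the order-$(k+1)$ defect $\delta_{k+1}$ indeed decomposes along primitive directions as above, i.e.\ that no ``mixed'' contributions appear outside $\bigoplus_{\mm'}\mathfrak{h}_{\mm'}$. This is ensured by the inclusion $[\mathfrak{h}_{\mm_1},\mathfrak{h}_{\mm_2}]\subset \mathfrak{h}_{\mm_1+\mm_2}$, so that the inductive application of Baker--Campbell--Hausdorff keeps us inside the sum of the $\mathfrak{h}_{\mm'}$. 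A subsidiary point requiring care is that the cyclic ordering in Definition~\ref{def:tot_mon} be unambiguous in the presence of infinitely many rays, but this too follows from $t$-adic finiteness: modulo each $t^k$ only finitely many walls contribute, and the counterclockwise product among those finitely many is well-defined.
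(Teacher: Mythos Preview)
Your approach is correct and is exactly the inductive scheme the paper sketches in one sentence (kill the monodromy modulo $t^{k+1}$ by adding finitely many outgoing walls, then iterate over $k$); you have simply supplied the Lie-algebraic details that the paper defers to the reference. One small imprecision worth flagging: the inclusion $[\mathfrak{h}_{\mm_1},\mathfrak{h}_{\mm_2}]\subset \mathfrak{h}_{\mm_1+\mm_2}$ is false as stated, since for instance $[z^{2\mm_1}D_{\mm_1},\,z^{\mm_2}D_{\mm_2}]$ lands in the graded piece indexed by $2\mm_1+\mm_2$ rather than $\mm_1+\mm_2$; but what your argument actually uses---that the direct sum $\bigoplus_{\mm'}\mathfrak{h}_{\mm'}$ is closed under bracket---is true and follows from the same computation.
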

\NI It turns out that the scattering diagram $\ks(D)$ in Proposition~\ref{prop:KSGS} is essentially unique. More precisely,
 it is unique up to the following moves and their inverses: (i) adding a trivial wall with label $f = 1$, or (ii) replacing two walls $(\frakd,f_1),(\frakd,f_2)$ with a single wall $(\frakd,f_1f_2)$ (note that neither of these moves affects the total monodromy).

\subsection{Some examples of scattering diagrams}\label{subsec:toy_examples}

The proof of Proposition~\ref{prop:KSGS} is not difficult: one kills the total monodromy modulo $t^k$ by adding finitely many walls, and does this inductively over all $k$; see e.g. \cite[Thm. 1.4]{gross2010tropical} for more details. Nevertheless, it can be quite challenging and subtle to fully compute $\ks(\calD)$, even for very simple scattering diagrams $\calD$.

\begin{example}
 Let $\calD_{e_1,e_2}^{1,1}$ denote the scattering diagram with two initial incoming walls in directions $(-1,0)$ and $(0,-1)$, with labels $1+tx$ and $1+ty$ respectively.
The Kontsevich--Soibelman algorithm adds $3$ outgoing walls. The first two added walls complete the two initial incoming rays into lines and have the same labels $1+tx$ and $1+ty$, while the third wall points in the $(1,1)$ direction and has label $1+t^2xy$. See Figure~\ref{fig:sd1}.
\end{example}

\begin{figure}[H]
\centering
\includegraphics[width=0.8\textwidth]{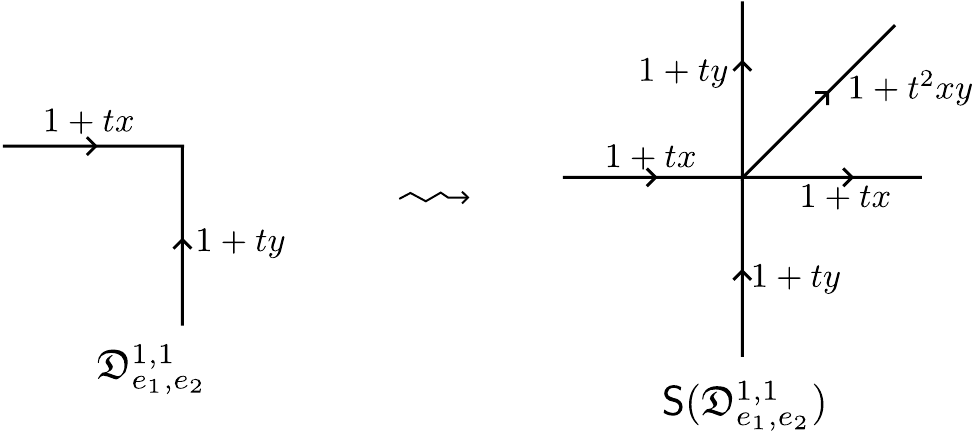}
\caption{The scattering diagram $\calD_{e_1,e_2}^{1,1}$ and the result $\ks(\calD_{e_1,e_2}^{1,1})$ after the Kontsevich--Soibelman algorithm.}
\label{fig:sd1}
\end{figure}

\begin{exercise}
 Check by hand that the total monodromy of the scattering diagram with five rays depicted on the right side of Figure~\ref{fig:sd1} is indeed trivial.
\end{exercise}

\begin{example}
Let $\calD_{e_1,e_2}^{2,2}$ denote the scattering diagram with two inital incoming walls in directions $(-1,0)$ and $(0,-1)$, now with labels $(1+tx)^2$ and $(1+ty)^2$.
In this case, the Kontsevich--Soibelman algorithm adds infinitely many outgoing walls, with directions $(k,k+1)$ and $(k+1,k)$ for all $k \in \Z_{\geq 0}$, as well as an outgoing wall in direction $(1,1)$. In other words, there are two infinite sequences of slopes which converge to $1$ from both sides.
The wall labels are as follows:
\begin{align*}
f_{k,k+1} = (1+t^{2k+1}x^ky^{k+1})^2,\;\;\;\;\; f_{k+1,k} = (1+t^{2k+1}x^{k+1}y^k)^2,\;\;\;\;\; f_{1,1} = (1-t^2xy)^{-4}.
\end{align*}
See Figure~\ref{fig:sd2}.
\end{example}
\begin{figure}[H]
\centering
\includegraphics[width=0.8\textwidth]{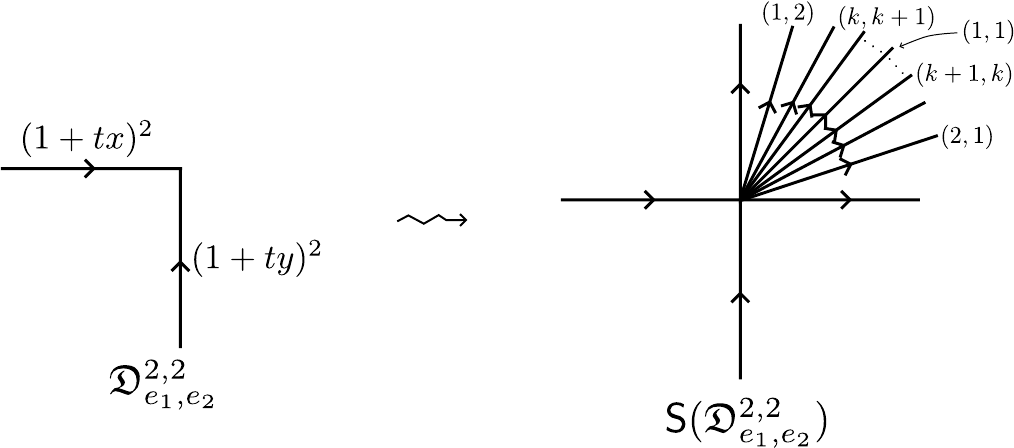}
\caption{The scattering diagram $\calD_{e_1,e_2}^{2,2}$ and the result $\ks(\calD_{e_1,e_2}^{2,2})$ after the Kontsevich--Soibelman algorithm.}
\label{fig:sd2}
\end{figure}

\begin{example}
Our final example is in fact a theorem, first proved by Gross--Pandharipande \cite{gross2010quivers}, and later extended by Gr\"afnitz--Luo \cite{grafnitz2023scattering} following the techniques of Gross--Hacking--Keel--Kontsevich \cite{GHKK2018}.
Starting with $\calD_{e_1,e_2}^{3,3}$, the resulting scattering diagram $\ks(\calD_{e_1,e_2}^{3,3})$ contains an infinite sequence of walls in directions $(1,3),(3,8),(8,21),\dots$ whose slopes converge to $\tfrac{1}{2}(3+\sqrt{5})$ from above, and another infinite sequence of walls in directions $(3,1),(8,3),(21,8),\dots$ whose slopes converge to $\tfrac{1}{2}(3-\sqrt{5})$ from below.
Within the cone spanned by these two limiting rays, there is a wall of every rational slope. See Figure~\ref{fig:sd3}.

\end{example}

\begin{figure}[H]
\centering
\includegraphics[width=0.8\textwidth]{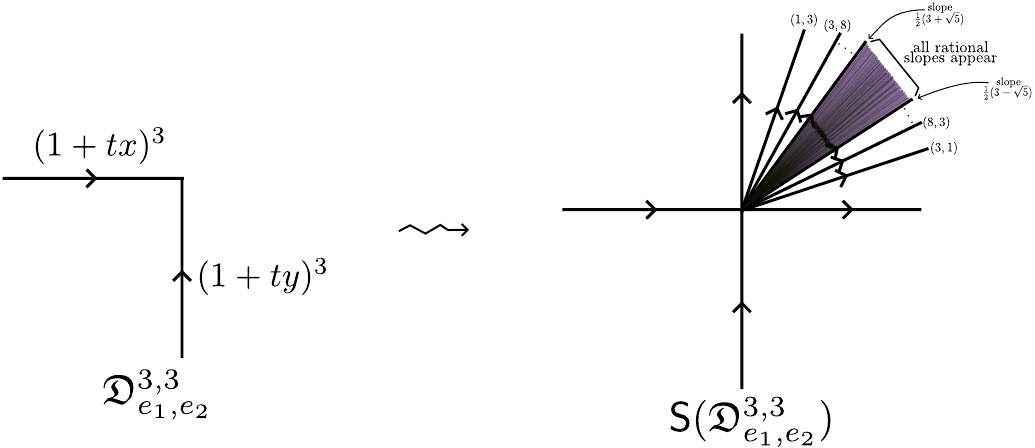}
\caption{The scattering diagram $\calD_{e_1,e_2}^{3,3}$ and the result $\ks(\calD_{e_1,e_2}^{3,3})$ after the Kontsevich--Soibelman algorithm.}
\label{fig:sd3}
\end{figure}

The above examples naturally fit into a more general family as follows.
\begin{notation}\label{not:D_m^l}
Given $\mm_1,\dots,\mm_\ell \in \Z^2$ and $k_1,\dots,k_\ell \in \Z_{\geq 1}$, let $\calD_{\mm_1,\dots,\mm_\ell}^{k_1,\dots,k_\ell}$ denote the scattering diagram with incoming walls $(\R_{\leq 0} \cdot \mm_s,(1+tz^{\mm_s})^{k_s})$ for $s = 1,\dots,\ell$.  
\end{notation}

\subsection{The tropical vertex theorem}\label{subsec:tropical_vertex}

Recall that we seek to construct well-placed curves with respect to a uninodal anticanonical divisor, and Proposition~\ref{proplet:fund_bij} relates these to certain curves in a corresponding edge toric surface.
The tropical vertex theorem states that certain curves in edge toric surfaces are encoded by an associated scattering diagram.
Here is a somewhat informal statement.

\begin{theorembox}[Gross--Pandharipande--Siebert \cite{gross2010tropical}]\label{thm:GPS}
Let $(X_\tor,D_\tor)$ be a toric pair and $(\wt{X}_\tor,\wt{D}_\tor)$ an edge toric blowup with blowup data $\mm_1,\dots,\mm_\ell \in \Z^2$.
Put $\calD := \calD_{\mm_1,\dots,\mm_\ell}^{1,\dots,1}$. Then the following coincide:
\begin{itemize}
  \item the log coefficients of wall labels in $\ks(\calD)$
  \item the ``counts'' of rational curves in $\wt{X}_\tor$ which intersect $\wt{D}_\tor$ in one point.
\end{itemize}
\end{theorembox}

More precisely, for primitive $(a,b) \in \Z^2$, if the ray $\R_{\geq 0} \cdot (a,b)$ has label $f_{a,b} \in \C[z^{(a,b)}]\llbracket t \rrbracket$, then the coefficient of $z^{\ka(a,b)}$ in $\log f_{a,b}|_{t=1}$ is a relative (or log) Gromov--Witten count of rational curves in $\wt{X}_\tor$ which intersect $\wt{D}_\tor$ in a single smooth point of the divisor component associated to the fan ray $\R_{\geq 0} \cdot W_\tor(a,b)$, with intersection multiplicity $\ka$. Note that we can always assume that there is such a divisor component of $\wt{D}_\tor$, at least after some additional corner blowups.

\begin{remark}
Since we do not require $\mm_1,\dots,\mm_\ell$ to be pairwise distinct, $\calD_{\mm_1,\dots,\mm_\ell}^{1,\dots,1}$ can sometimes be written more succinctly, e.g. $\calD_{(1,0),(1,0),(0,1)}^{1,1,1} = \calD_{(1,0),(0,1)}^{2,1}$ (c.f. Figure~\ref{fig:fund_bij}).
\end{remark}

Typically the counts in the second bullet of Theorem~\ref{thm:GPS} will be quite virtual, with nontrivial contributions from multiple covers.
However, we are most interested in those curves in $\wt{X}_\tor$ which relate to $(p,q)$-well-placed curves for $\gcd(p,q) = 1$, and in this case one can show that the virtual counts actually agree with the naive ones, with every curve counting positively.
Thus, combining Theorem~\ref{thm:GPS} with Proposition~\ref{proplet:fund_bij} (along with some technical lemmas which we suppress here) essentially gives the following. 

\begin{corollary}
  Let $(X,D)$ be a uninodal Looijenga pair, and let $\calT$ be a toric model with blowup data $\mm_1,\dots,\mm_\ell \in \Z^2$.
  Put $\calD_\calT := \calD_{\mm_1,\dots,\mm_{\ell}}^{1,\dots,1}$. The following are equivalent:
  \begin{itemize}
    \item there exists a $(p,q)$-well-placed rational algebraic curve in $(X,D)$
    \item the coefficient of $z^{W_\calT(p,q)}$ in $\ks(\calD_\calT)$ is nonzero.
  \end{itemize}
\end{corollary}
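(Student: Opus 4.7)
The plan is to chain together Proposition~\ref{proplet:fund_bij} (the fundamental bijection) and Theorem~\ref{thm:GPS} (the tropical vertex theorem), then dispose of a positivity issue so that ``existence of a well-placed curve'' matches ``nonzero coefficient'' on the nose.

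First I would apply the fundamental bijection to reinterpret a $(p,q)$-well-placed rational algebraic curve in $(X,D)$ as a rational algebraic curve in $\wt{X}$ that meets $\wt{D}$ in a single point, sitting on the toric divisor component whose primitive fan ray generator is $W_\calT(p,q)/\gcd(p,q)$, with local intersection multiplicity $\gcd(p,q)$. After performing additional corner blowups of $X_\tor$---which affect neither the interior $X \setminus D$ nor the initial data $\mm_1,\dots,\mm_\ell$ of $\calD_\calT$---we may assume that this divisor component actually appears in $\wt{D}_\tor$, so that the tropical vertex theorem becomes directly applicable. Next, setting $\frakd := \R_{\geq 0}\cdot W_\calT(p,q)$ and letting $f_\frakd$ be the label of $\frakd$ in $\ks(\calD_\calT)$ (taking $f_\frakd = 1$ if no wall sits on $\frakd$), Theorem~\ref{thm:GPS} identifies the coefficient of $z^{W_\calT(p,q)}$ in $\log f_\frakd|_{t=1}$ with the (log) Gromov--Witten count of exactly the curves produced in the previous step. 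Composing the two translations, the coefficient of $z^{W_\calT(p,q)}$ in $\ks(\calD_\calT)$ is computed, up to combinatorial factors, by an enumerative count of $(p,q)$-well-placed curves in $(X,D)$.

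The main obstacle is the positivity / non-cancellation claim hidden in the previous sentence. In general the log GW invariants of Theorem~\ref{thm:GPS} are virtual and can absorb multiple-cover contributions, so \emph{a priori} the coefficient could vanish even when geometric curves exist. To close the equivalence I would argue---this is what the author defers to ``technical lemmas we suppress here''---that in our setting each $(p,q)$-well-placed curve contributes a strictly positive integer and no sign-indefinite contributions appear. Concretely, one reduces to primitive log classes (the $\gcd(p,q)=1$ case) and then appeals to a positivity statement in the spirit of Theorem~\ref{thmlet:E}(iv), based on automatic transversality in dimension four, to guarantee that each contributing curve is counted with multiplicity $+1$. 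The $\gcd(p,q)>1$ case is handled by a parallel argument on the appropriate cover component of the moduli space. Once positivity is in hand the biconditional is immediate: a well-placed curve forces the coefficient to be nonzero, while conversely a nonzero coefficient forces the moduli space to be nonempty and hence produces a $(p,q)$-well-placed curve via the inverse of the fundamental bijection.
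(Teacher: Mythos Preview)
Your proposal is correct and follows essentially the same route as the paper: combine Proposition~\ref{proplet:fund_bij} with Theorem~\ref{thm:GPS}, then invoke a suppressed positivity argument so that nonvanishing of the virtual count is equivalent to existence of an honest curve. The paper's own treatment is in fact terser than yours---it simply says ``combining Theorem~\ref{thm:GPS} with Proposition~\ref{proplet:fund_bij} (along with some technical lemmas which we suppress here)''---and explicitly restricts the positivity claim to the coprime case $\gcd(p,q)=1$, whereas you also sketch how one might handle $\gcd(p,q)>1$; but the underlying strategy is identical.
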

Recall here that $W_\calT$ is the bijection from $\Z^2_{\geq 0}/\sim$ and $\Z^2$ as in Proposition~\ref{proplet:fund_bij}.
The rough upshot is that, in order to complete the proof of Theorem~\ref{thmlet:C}, we just need to determine the walls in the scattering diagram $\ks(\calD_{(-1,-3),(1,0)}^{1,1})$ (or more precisely those whose label has nonvanishing lowest order term).

\sss

We end this lecture with a very brief informal sketch of the proof of Theorem~\ref{subsec:tropical_vertex}, using the language of neck-stretching from symplectic field theory (note that the actual proof in \cite{gross2010tropical} uses the degeneration formula in relative Gromov--Witten theory).
Let $U$ be a small neighborhood of the toric divisor $D_\tor$ in $X_\tor$ with contact type boundary, and let $\wt{U}$ be the corresponding neighborhood in $\wt{X}_\tor$ which contains $\wt{D}_\tor$ and also all of the exceptional divisors of the edge blowups.
The main idea is to neck-stretch $\wt{X}_\tor$ along $\bdy \wt{U}$ in order to decompose the curve counts in the second bullet of Theorem~\ref{subsec:tropical_vertex} into simpler pieces.
The resulting pieces are as follows:
\begin{enumerate}[label=(\arabic*)]
  \item curves in $X_\tor$ which intersect $D_\tor$ in multiple points with prescribed intersection multiplicities
  \item multiple covers of trivial cylinders in the ``normal bundle'' $N(\wt{D}_\tor)$ of $\wt{D}_\tor \subset \wt{X}_\tor$.
\end{enumerate}
It is well-known that the contributions from (1) are given by counts of tropical curves in $\R^2$ with prescribed asymptotics. 
Meanwhile, the (virtual) contributions from (2) are computed explicitly in \cite[Prop 5.2]{gross2010tropical}, following \cite{bryanpandharipande_CY3} (these are typically rational numbers, i.e. not necessarily positive or integral).
Gluing together (1) and (2) gives the counts in the second bullet of Theorem~\ref{subsec:tropical_vertex} as an explicit signed weighted count of tropical curves. 
One then relates this to the counts in the first bullet of Theorem~\ref{subsec:tropical_vertex} using the ``deformations of scattering diagrams'' technique discussed in \cite[\S1.4]{gross2010tropical}.

\begin{remark}
 For the uninodal Looijenga pair $(\CP^2,N_0)$ (recall Example~\ref{ex:CP2_N0}) with its toric model $\calT_0$ (Example~\ref{ex:T0}) and associated scattering diagram $\calD_{\calT_0} = \calD_{(-1,3),(1,0)}^{1,1}$, the symmetries $\Phi,\Psi$ for well-placed curves translate into symmetries of the scattering diagram $\ks(\calD_{\calT_0})$. 
\end{remark}

\section{Putting it all together}\label{sec:putting_together}

In this lecture, we first introduce in \S\ref{prop:change_of_lattice} one final ingredient needed to prove Theorem~\ref{thmlet:C}, the so-called ``change of lattice trick''. 
We then readily complete the proof of Theorem~\ref{thmlet:C} in \S\ref{subsec:proof_of_Thm_C}.
We end in \S\ref{subsec:del_Pezzo} by formulating a generalization of Theorem~\ref{thmlet:A} to del Pezzo surfaces.

\subsection{The change of lattice trick}\label{subsec:change_of_lattice}

It is natural to generalize the definition of scattering diagram given in Definition~\ref{def:scat_diag} by replacing $\R^2$ with the real vector space $\MM_\R := \MM \otimes \R$, where $\MM$ is a rank two lattice (i.e. an abelian group which is abstractly isomorphic to $\Z^2$). In this case the wall-crossing monodromies depend implicitly on the lattice $\MM$. In particular, passing to a finite rank sublattice $\MM' \subset \MM$ really gives a different scattering diagram, even though this is a natural isomorphism $\MM'_\R \cong \MM_\R$.

In the case of scattering diagram of the form $\calD_{\mm_1,\mm_2}^{\ell_1,\ell_2}$ (recall Notation~\ref{not:D_m^l}), by replacing $\Z^2$ with the sublattice $\lan \mm_1,\mm_2\ran$ spanned by $\mm_1,\mm_2$ and pulling back by an isomorphism $\Z^2 \ra \lan \mm_1,\mm_2\ran$, we can convert the rays $\R_{\leq 0} \cdot \mm_1$ and $\R_{\leq 0} \cdot \mm_2$ to $\R_{\leq 0} \cdot e_1$ and $\R_{\leq 0} \cdot e_2$ as in the examples in \S\ref{subsec:toy_examples}, at the cost of changing the exponents appearing in the labels. This is called the ``change of lattice trick'' in \cite[\S C.3]{GHKK2018} (see also \cite[\S1.2]{grafnitz2023scattering} and \cite[\S6.1]{sesqui}).
Concretely, the following reduces the computation of $\ks(\calD_{\mm_1,\mm_2}^{\ell_1,\ell_2})$ to that of $\ks(\calD_{e_1,e_2}^{d\ell_1,d\ell_2})$ for $d := \mm_1 \wedge \mm_2$.
\begin{propositionbox}\label{prop:change_of_lattice}
  Fix positive integers $\ell_1,\ell_2 \in \Z_{\geq 1}$ and primitive noncolinear vectors $\mm_1,\mm_2 \in \Z^2$ with determinant
  $d := \mm_1 \wedge \mm_2 \in \Z$. The rays in $\ks(\calD_{\mm_1,\mm_2}^{\ell_1,\ell_2})$ are precisely the images of the rays in $\ks(\calD_{e_1,e_2}^{d\ell_1,d\ell_2})$ under the map $\R^2 \ra \R^2$ sending $e_1$ to $\mm_1$ and $e_2$ to $\mm_2$.
\end{propositionbox}
Moreover, for any $\mm = a\mm_1 + b \mm_2$ with $a,b \in \Z$, the coefficient of $z^\mm$ in $\ks(\calD_{\mm_1,\mm_2}^{\ell_1,\ell_2})$ agrees (up to a small combinatorial factor) with the coefficient of $z^{(a,b)}$ in $\ks(\calD_{e_1,e_2}^{d\ell_1,d\ell_2})$, while the coefficient of $z^\mm$ vanishes if $\mm \notin \lan \mm_1,\mm_2\ran$.

\begin{example}
For our main example $(\CP^2,N_0)$ with toric model $\calT_0$ from Example~\ref{ex:T0}, we have $\mm_1 = (-1,-3)$, $\mm_2 = (1,0)$, and $d := \mm_1 \wedge \mm_2 = 3$.
Thus the rays in $\ks(\calD_{\calT_0})$ are the images of the rays in $\ks(\calD_{e_1,e_2}^{3,3})$ under the linear transformation with matrix
$A =\begin{pmatrix}
-1 & 1\\ -3 & 0 
\end{pmatrix}$.
\end{example}

\begin{upshot}\label{upshot:rays_from_3_3}
For each ray $\R_{\geq 0} \cdot (a,b)$ in $\ks(\calD_{e_1,e_2}^{3,3})$, we get a ray $\R_{\geq 0} \cdot (a\mm_1 + b \mm_2)$ in $\ks(\calD_{\calT_0})$, and thus a $W_{\calT_0}^{-1}(a\mm_1 + b\mm_2)$-well-placed curve in $(\CP^2,N_0)$.
\end{upshot}

\subsection{Completing the proof of Theorem~\ref{thmlet:C}}\label{subsec:proof_of_Thm_C}

The following exercise now completes the proof of the existence part of Theorem~\ref{thmlet:C}.

\begin{exercise}
Under Upshot~\ref{upshot:rays_from_3_3} and the description of $W_{\calT_0}$ from Example~\ref{ex:W_for_T0}, the set of coprime $p \geq q \geq 1$ for which we get a $(p,q)$-well-placed curve corresponds precisely to cases (a) and (b) in Theorem~\ref{thmlet:C}, i.e. (a) $p/q = \tfrac{\fib_{2k+5}}{\fib_{2k+1}}$ and (b) $p/q > \tau^4$.
\end{exercise}

Meanwhile, the uniqueness part of Theorem~\ref{thmlet:C} is elementary and requires only the adjunction formula.

\begin{exercise}
 Show that any index zero rational $(p,q)$-sesquicuspidal curve in $\CP^2$ of degree $d$ must satisfy the following conditions:
 \begin{itemize}
   \item {\em index zero}: $d = \tfrac{1}{3}(p+q)$
   \item {\em adjunction}: $(d-1)(d-2) \geq (p-1)(q-1)$,
 \end{itemize}
 and that these imply the Diophantine inequality
 \begin{align}\label{eq:dioph_ineq}
 p^2 + q^2 - 7pq + 9 \geq 0.
 \end{align}
Show that the solutions to \eqref{eq:dioph_ineq} with coprime $p \geq q \geq 1$ precisely correspond to cases (a) and (b) in Theorem~\ref{thmlet:C}.
\end{exercise}

\subsection{Monotone del Pezzo surfaces}\label{subsec:del_Pezzo}
 
Although we have mostly focused on constructing sesquicuspidal curves in the complex projective plane, much of the formalism in these notes extends to any rational surface with a uninodal anticanonical divisor, working in either the complex or symplectic category. 
One natural extension is to consider \hl{del Pezzo surfaces},
which by definition are those nonsingular complex projective surfaces for which the anticanonical bundle is ample.
Up to diffeomorphism, these are $\CP^2 \# (\underbrace{\ovl{\CP}^2 \# \cdots \# \ovl{\CP}^2}_j)$ for $j = 0,1,\dots,8$ (i.e. $j$-point blowup of $\CP^2$) and $\CP^1 \times \CP^1$.
It is a fact that, up to symplectomorphism, each del Pezzo surface $M$ admits a unique symplectic form $\om_X$ which is \hl{unimonotone}, i.e. $[c_1(X)] = [\om_X] \in H^2(M;\R)$, and moreover any unimonotone symplectic four-manifold is symplectomorphic to one of these (see e.g. \cite{salamon2013uniqueness}).

Recall that the \hl{degree} of a del Pezzo surface $M$ is by definition the self-intersection number of the anticanonical class, i.e. $c_1(M)^2 \in \{9,\dots,0\}$.
For the del Pezzo surfaces with degree $c_1(M)^2 \in \{9,8,7,6,5\}$, it turns out that the biholomorphism classification coincides with the diffeomorphism classification, and we refer to these del Pezzo surfaces as \hl{rigid}.
For a rigid del Pezzo surface $M$, equipped with its unimonotone symplectic form, it is shown in \cite{cristofaro2020infinite} that $\emb_M(1,a)$ contains an explicit infinite staircase closely analogous to the Fibonacci staircase $\emb_{\CP^2}(1,a)$ for $a \in [1,\tau^4]$, and we denote the accumulation point by $a_\acc^M \in (1,\infty)$ (e.g. $a_\acc^{\CP^2} = \tau^4$).

\begin{theoremconjecturebox}\label{thmconj:del_Pezzo}
For any unimonotone symplectic four-manifold $M^4$ and $N \in \Z_{\geq 1}$, we have:
\begin{itemize}
  \item for $c_1(M)^2 \in \{9,8,7,6,5\}$, $\emb_M^N(1,a) = 
\begin{cases}
  \emb_M(1,a) & 1 \leq a \leq a_\acc^M \\
  \frac{a}{a+1} & a > a_\acc^M
\end{cases}
  $
\item for $c_1(M)^2 \in \{4,3,2,1\}$, $\emb_M^N(1,a) = \frac{a}{a+1}$ for all $a \geq 1$.
\end{itemize}
\end{theoremconjecturebox}

\NI More precisely, the lower bounds in all cases are proved in \cite{sesqui} using the above scattering diagram techniques, and the symplectic embeddings needed for the corresponding upper bounds are known in the cases $c_1(M)^2 \in \{9,8,7,6\}$ (via \cite[Prop. 3.1]{cristofaro2022higher}) but not in the remaining cases. 

\begin{remark}
  The proof of the obstructive part of Theorem / Conjecture ~\ref{thmconj:del_Pezzo} proceeds via an analogue of Theorem~\ref{thmlet:C} for sesquicuspidal curves in rigid del Pezzo surfaces.
However, an important subtlety is that we only classify those $(p,q)$ for which there exists a rational index zero $(p,q)$-sesquicuspidal curve with $\gcd(p,q) = 1$, and {\em not} the homology classes of such curves.
A refined classification result which also records homology classes appears to be important for the general (unrestricted) four-dimensional stabilized ellipsoid embedding problem, and is closely related to understanding refinements of the scattering diagrams $\ks(\calD_{\mm_1,\dots,\mm_\ell}^{k_1,\dots,k_\ell})$ in which we work over $\C[x,x^{-1},y,y^{-1}]\llbracket t_1,\dots,t_k\rrbracket$ rather than $\C[x,x^{-1},y,y^{-1}]\llbracket t\rrbracket$.
\end{remark}




\bibliographystyle{alpha}
\bibliography{biblio}

\end{document}